\documentclass[reqno]{amsart}

\numberwithin{equation}{section}

  \usepackage[dvips]{graphicx}
  \usepackage{epsfig,psfrag} 
 \usepackage[colorlinks=true]{hyperref}
\hypersetup{urlcolor=blue, citecolor=red}

\usepackage{a4wide}


\newtheorem{maintheorem}{Theorem}

\newtheorem{theorem}{Theorem}[section]
\newtheorem{corollary}{Corollary}
\newtheorem{lemma}[theorem]{Lemma}
\newtheorem{example}{Example}
\newtheorem{proposition}{Proposition}
\newtheorem{conjecture}{Conjecture}
\newtheorem{claim}{Claim}

\theoremstyle{definition}
\newtheorem{definition}[theorem]{Definition}
\newtheorem{remark}{Remark}

\newcommand{\la} {\lambda}

\newcommand{\vfi}{\varphi}

\newcommand{\gera}{{\rm span}}
\newcommand{\Z}{\mathbb{Z}}

\newcommand{\R}{\mathbb{R}}
\newcommand{\T}{\mathbb{T}}
\newcommand{\sS}{\mathbb{S}}

\newcommand{\sing}{\mathrm{Sing}}
\newcommand{\diver}{\mathrm{div}}

\newcommand{\ov}{\overline}

\newcommand{\SG}{{\mathcal G}}

\newcommand{\SL}{{\mathcal L}}

\newcommand{\SO}{{\mathcal O}}


\newcommand{\wt}{\widetilde}

\title[Dominated splittings]
{Dominated Splittings for Flows with singularities}

\author[V. Araujo, A. Arbieto \& L. Salgado]
{Vitor Araujo, Alexander Arbieto and Luciana Salgado}

\subjclass{Primary: 37D30; Secondary: 37D25.}

\keywords{Dominated splitting, partial hyperbolicity,
  sectional hyperbolicity.}

\thanks{Authors were partially supported by CNPq,
  PRONEX-Dyn.Syst., FAPERJ and FAPESB.}

\address[V.A.]{Instituto de Matem\'atica,
Universidade Federal da Bahia\\
Av. Adhemar de Barros, S/N , Ondina,
40170-110 - Salvador-BA-Brazil}

\email{vitor.d.araujo@ufba.br}

\address[A.A.]{Universidade Federal do Rio de Janeiro
\\
Instituto de Matem\'atica
  \\
  P. O. Box 68530, 21945-970 Rio de Janeiro, Brazil
  }

 \email{arbieto@im.ufrj.br}

 \address[L.S.]{
  Instituto de Matem\'atica Pura e Aplicada -
  Estrada Dona Castorina, 110, Jardim Bot\^anico, 22460-320 Rio de Janeiro, Brazil
}
 \email{lsalgado@impa.br}

\date{\today}

\begin{document}
\maketitle

\begin{abstract}
  We obtain sufficient conditions for an invariant splitting
  over a compact invariant subset of a $C^1$ flow $X_t$ to
  be dominated. In particular, we reduce the requirements to
  obtain sectional hyperbolicity and hyperbolicity.
\end{abstract}

\section{Introduction}
\label{sec:introd}

The theory of hyperbolic dynamical systems is one of the main
paradigm in dynamics. Deve-loped in the 1960s and 1970s after
the work of Smale, Sinai, Ruelle, Bowen
\cite{Sm67,Si68,Bo75,BR75}, among many others, this theory
deals with compact invariant sets $\Lambda$ for
diffeomorphisms and flows of closed finite-dimensional
manifolds having a hyperbolic splitting of the tangent
space. That is, if $X$ is a vector field and $X_t$ is the generated flow then we say that an invariant and compact set $\Lambda$ is hyperbolic if there exists  a
continuous splitting of the tangent bundle over $\Lambda$, $T_\Lambda M= E^s\oplus E^X \oplus E^u$,
where $E^X$ is the direction of the vector field, the
subbundles are invariant under the derivative $DX_t$ of the
flow $X_t$
\begin{align*}
  DX_t\cdot E^*_x=E^*_{X_t(x)},\quad
  x\in\Lambda, \quad t\in\R,\quad *=s,X,u;
\end{align*}
$E^s$ is uniformly contracted by $DX_t$ and $E^u$ is
uniformly expanded: there are $K,\lambda>0$ so that
\begin{align}\label{eq:def-hyperbolic}
\|DX_t\mid_{E^s_x}\|\le K e^{-\lambda t},
  \quad
  \|DX_{-t} \mid_{E^u_x}\|\le K e^{-\lambda t},
  \quad x\in\Lambda, \quad t\in\R.
\end{align}
Very strong properties can be deduced from the existence of
a such structure; see for
instance~\cite{KH95,robinson2004}. 

Weaker notions of hyperbolicity, like the notions of
dominated splitting, partial hyperbolicity, volume
hyperbolicity and singular or sectional hyperbolicity (for
singular flows) have been proposed to try to enlarge the
scope of this theory to classes of systems beyond the
uniformly hyperbolic ones; see~\cite{BDV2004} and
\cite{AraPac2010} for singular or sectional
hyperbolicity.  However, the existence of dominated splittings is the weaker one.

Many researchers have studied the  relations of the existence of dominated splittings
  with other dynamical phenomena, mostly in the discrete time case, such as robust
  transitivity, homoclinic tangencies and heteroclinic
  cycles, and also the possible extension of this notion to
  endomorphisms; see for instance
  \cite{Moryiasu1991,Wen2002,Newhouse2004,MPP04,BDV2004,BGV06,
    LiangLiu2008}.

  However, the notion of dominated splittings deserves
  attention. Several authors used this notion for the Linear
  Poincar\'e flow, see \cite{Do87,Li80,Man82}, and this is
  useful in the absence of singularities, as in
  \cite{GW2006}. We remark that this flow is defined only in
  the set of regular points. However, in the singular case,
  it is a non-trivial question to obtain a dominated
  splitting for the derivative of the flow, and thus
  allowing singularities. Indeed, it is difficult to obtain
  it, from a dominated splitting for the Linear Poincar\'e
  Flow. See \cite{LGW05}, for an attempt to solve this, in
  the context of robustly transitive sets, using the
  extended Linear Poincar\'e Flow.

\begin{definition}\label{def1}
  A \emph{dominated splitting} over a compact invariant set $\Lambda$ of $X$
  is a continuous $DX_t$-invariant splitting $T_{\Lambda}M =
  E \oplus F$ with $E_x \neq \{0\}$, $F_x \neq \{0\}$ for
  every $x \in \Lambda$ and such that there are positive
  constants $K, \lambda$ satisfying
  \begin{align}\label{eq:def-dom-split}
    \|DX_t|_{E_x}\|\cdot\|DX_{-t}|_{F_{X_t(x)}}\|<Ke^{-\la
      t}, \ \textrm{for all} \ x \in \Lambda, \ \textrm{and
      all} \,\,t> 0.
  \end{align}
\end{definition}

The purpose of this article is to study the existence of
dominated splittings for flows with singularities. In one
hand, we study this question in the presence of some
sectional hyperbolicity, once that this theory was build to
understand flows like the Lorenz attractor, which are the
prototype of non-hyperbolic dynamics with singularities, but
with robust dynamical properties. On the other hand, we
present several examples to clarify the role of the
condition on the singularities.

 This article is organized as follows. In
 Section~\ref{sec:statem-results}, we present definitions
 and auxiliary results needed in the proof of
 Theorem~\ref{mthm:domination}, and some examples showing
 that the domination condition on the singularities is
 necessary. In Section~\ref{applic-examp}, we present
 applications of our main result and more examples implying
 that these results are not valid for diffeomorphisms.
 In Section~\ref{sec:pf-sec-lyap} Theorem
 \ref{mthm:sec-lyap-exp} is proved assuming
 Theorem~\ref{mthm:domination}.  In
 Section~\ref{sec:pf-dom-and-hypnocenter}, we prove the
 Theorem~\ref{mthm:domination} and Theorem~
 \ref{mthm:hyperbolic-no-center}. Theorem~\ref{mthm:unifexp3d-hyp}
 and Corollary~\ref{cor:unifexp3d-hyp} are proved in
 Section~\ref{sec:proof-theorem-3d} using the previously
 proved results. Finally, in Section
 \ref{examples}, we present the examples stated in
 Propositions~\ref{t.examples} and \ref{diffeo-case}.

\section{Statements of the results}
\label{sec:statem-results}


Let $M$ be a connected compact finite $n$-dimensional manifold, $n \geq 3$,
with or without boundary. We consider a vector
field $X$, such that $X$ is inwardly transverse to the
boundary $\partial M$, if $\partial M\neq\emptyset$. The flow generated by $X$ is denoted by $\{X_t\}$.

An \emph{invariant set} $\Lambda$ for the flow of $X$ is a
subset of $M$ which satisfies $X_t(\Lambda)=\Lambda$ for all
$t\in\R$.  The \emph{maximal invariant set of the flow} is
$M(X):= \cap_{t \geq 0} X_t(M)$, which is clearly a compact
invariant set.

A \emph{singularity} for the vector field $X$
is a point $\sigma\in M$ such that $X(\sigma)=\vec0$. The
set formed by singularities is denoted by $\sing(X)$.  We say that a \emph{singularity is
  hyperbolic} if the eigenvalues of the derivative
$DX(\sigma)$ of the vector field at the singularity $\sigma$
have nonzero real part.

A compact invariant set $\Lambda$ is said to be
\emph{partially hyperbolic} if it exhibits a dominated
splitting $T_{\Lambda}M = E \oplus F$ such that subbundle
$E$ is \emph{uniformly contracted}, i.e. there exists $C>0$ and $\lambda>0$ such that $\|DX_t|_{E_x}\|\leq Ce^{-\lambda}$ for $t\geq 0$. In this case $F$ is the
\emph{central subbundle} of $\Lambda$.

We say that a $DX_t$-invariant subbundle $F \subset
  T_{\Lambda}M$ is a \emph{sectionally expanding} subbundle
  if $\dim F_x \geq 2$ is constant for $x\in\Lambda$
  and there are positive constants $C , \lambda$ such that for every $x
    \in \Lambda$ and every two-dimensional linear subspace
    $L_x \subset F_x$ one has
    \begin{align}\label{eq:def-sec-exp}
      \vert \det (DX_t \vert_{L_x})\vert > C e^{\la t},
      \textrm{ for all } t>0.
    \end{align}


\begin{definition}\label{sechypset} \cite[Definition
  2.7]{MeMor06} A \emph{sectional-hyperbolic set} is a
  partially hyperbolic set whose singularities are
  hyperbolic and central subbundle is sectionally
  expanding.
\end{definition}

This is a generalization of hyperbolicity for compact
invariant sets with singularities accumulated by regular
orbits, since the Lorenz attractor and higher dimensional
examples are sectional-hyperbolic
\cite{Tu99,BPV97,MeMor06,AraPac2010} and because of the
following result whose proof can be found in \cite{MPP04,BDV2004,AraPac2010}.

\begin{lemma}[Hyperbolic Lemma]
  \label{pr:sec-hyp-no-sing}
  Every compact invariant subset of a sectional-hyperbolic
  set without singularities is a hyperbolic set.
\end{lemma}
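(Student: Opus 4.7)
Let $\Gamma \subset \Lambda$ be a compact $X_t$-invariant set with no singularities. My plan is to recover the three bundles of a hyperbolic splitting from the sectional-hyperbolic splitting $T_\Lambda M = E \oplus F$. I take $E^s := E|_\Gamma$ as the stable bundle (already uniformly contracted by the partial hyperbolicity of $\Lambda$), let $E^X_x := \R\cdot X(x)$ be the flow direction (well-defined, continuous, and $DX_t$-invariant because $X$ is nowhere zero on $\Gamma$), and aim to split $F|_\Gamma = E^X \oplus E^u$ where $E^u$ is a continuous $DX_t$-invariant subbundle that is uniformly expanded. As a preliminary step I would show that $X(x) \in F_x$ for every $x \in \Gamma$. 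If instead $X(x_0) = v_E + v_F$ with $v_E \neq 0$, then $\|DX_{-t} v_E\| \to \infty$ by uniform contraction of $E$, and the domination estimate forces the ratio $\|DX_{-t} v_F\|/\|DX_{-t} v_E\| \leq K e^{-\lambda t} \to 0$, so the direction of $X(X_{-t}(x_0)) = DX_{-t}X(x_0)$ converges to $E$. Compactness of $\Gamma$ and continuity of $E$ then yield an accumulation point $y \in \Gamma$ with $X(y) \in E_y$, hence $\|X(X_t y)\| \leq C e^{-\lambda t}\|X(y)\| \to 0$, contradicting $\inf_\Gamma \|X\| > 0$.

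With $E^X \subset F|_\Gamma$ in hand, the derivative $DX_t$ descends to a well-defined linear cocycle $\overline{DX_t}$ on the quotient bundle $F|_\Gamma / E^X$. I would then use sectional expansion to show $\overline{DX_t}$ is uniformly expanding: for any $v \in F_x \setminus E^X_x$, the plane $L_x := \mathrm{span}\{X(x), v\} \subset F_x$ satisfies $|\det(DX_t|_{L_x})| \geq C e^{\lambda t}$ by the sectional-expanding hypothesis, while $\|DX_t X(x)\| = \|X(X_t x)\|$ is bounded above and below by positive constants on the compact regular set $\Gamma$. Comparing the two quantities through the identity $|\det(DX_t|_{L_x})| = \|DX_t X(x) \wedge DX_t v\|/\|X(x)\wedge v\|$ yields a uniform lower bound of the form $\|\overline{DX_t}\bar v\| \geq C' e^{\lambda t}\|\bar v\|$ on the induced quotient cocycle.

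It remains to lift this quotient expansion to a genuine $DX_t$-invariant continuous complement $E^u$ of $E^X$ inside $F|_\Gamma$. The plan is to choose a continuous (not invariant) complement $N$ of $E^X$ in $F|_\Gamma$, for instance the orthogonal complement of $X$ inside $F$, and to show that the translates $DX_{-t}(N_{X_t(x)})$ form a Cauchy sequence in the Grassmannian of $F_x$ as $t \to +\infty$; the limit $E^u_x$ is then the required $DX_t$-invariant complement, and the uniform expansion on the quotient transfers to $E^u$. Once this is done, the splitting $T_\Gamma M = E^s \oplus E^X \oplus E^u$ has a uniformly contracting $E^s$ and a uniformly expanding $E^u$, which is precisely the definition of a hyperbolic set. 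The step I expect to be the main obstacle is this final lifting: it requires a cone-field or graph-transform argument, and the key input is that $E^X$ is dominated by the quotient direction $F/E^X$ (a consequence of the previous paragraph combined with the pinched norm of $X$ on $\Gamma$), which is what drives the contraction of the graph-transform operator and thus both the convergence of the iterates and the continuity and uniqueness of $E^u$.
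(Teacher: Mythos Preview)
The paper does not give its own proof of this lemma; it simply records the statement and defers to the references \cite{MPP04,BDV2004,AraPac2010}. Your outline is precisely the standard argument found there and is correct. A couple of remarks: your preliminary step (that $X(x)\in F_x$) is essentially the paper's Lemma~\ref{le:flow-center}, whose proof uses only uniform contraction of $E$ and the angle bound rather than full domination---in fact your own argument can be shortened in the same way, since once $\|DX_{-t}v_E\|\to\infty$ and the angle between $E$ and $F$ is bounded below, the norm $\|X(X_{-t}(x_0))\|$ itself blows up, contradicting $\sup_\Gamma\|X\|<\infty$ directly without passing to an accumulation point. The lifting step you flag as the main obstacle is indeed the routine part once the domination $E^X\prec F/E^X$ is in hand (bounded flow speed versus exponential quotient growth), and the graph-transform contraction you describe is exactly how the cited references produce the invariant unstable bundle.
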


The main result of this paper is the following.

\begin{maintheorem}\label{mthm:domination}
  Let $\Lambda$ be a compact invariant set of $X$ such that
  every singularity in this set is hyperbolic. Suppose that
  there exists a continuous $DX_t$-invariant splitting of the tangent bundle
  of $\Lambda$, $T_{\Lambda}M = E \oplus F$,
  where $E$ is uniformly contracted,
  $F$ is sectionally expanding and for some constants
  $C,\lambda > 0$ we have
\begin{align}
  \|DX_t\vert_{E_\sigma}\|\cdot\|DX_{-t}\vert_{F_\sigma}\| &< \label{eq3}
  Ce^{-\lambda t} \quad\text{for all}\quad
  \sigma\in\Lambda\cap\sing(X)\textrm{ and $t\geq 0$}.
\end{align}
Then $T_{\Lambda}M = E \oplus F$ is a dominated splitting.
\end{maintheorem}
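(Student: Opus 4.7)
The plan is to argue by contradiction, reducing the claim to a uniform lower bound on $m(DX_t|_{F_x}):=\|(DX_t|_{F_x})^{-1}\|^{-1}$. Since $E$ is uniformly contracted at rate $e^{-\la t}$, the domination inequality \eqref{eq:def-dom-split} is equivalent to the existence of $K,\la>0$ with $m(DX_t|_{F_x})\geq K^{-1}e^{\la t}\|DX_t|_{E_x}\|$ for every $x\in\Lambda$ and $t>0$; so if domination fails one obtains sequences $x_n\in\Lambda$, $t_n\to\infty$ and unit vectors $v_n\in F_{x_n}$ whose images contract too fast.

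A useful preliminary step is to show that $X(x)\in F_x$ for every regular $x\in\Lambda$. Decomposing $X(x)=v_E+v_F$ with $v_E\in E_x$, $v_F\in F_x$, the identity $DX_{-t}X(x)=X(X_{-t}(x))$, the backward expansion $\|DX_{-t}v_E\|\geq C^{-1}e^{\la t}\|v_E\|$ on $E$, and the angle bound $\|DX_{-t}(v_E+v_F)\|\geq\theta_0\|DX_{-t}v_E\|$ together force $\|v_E\|\leq C\theta_0^{-1}\sup_M\|X\|\,e^{-\la t}$ for every $t\geq 0$, hence $v_E=0$. With the flow direction inside $F$, the sectional-expansion estimate \eqref{eq:def-sec-exp} applied to $L=\mathrm{span}(X(x),v)$ and expanded in the basis $(X(x),v)$ yields
$$\|DX_tv\|\cdot\sin\angle(X(X_t(x)),DX_tv)\;\geq\;\frac{C\,e^{\la t}\,\|X(x)\wedge v\|}{\|X(X_t(x))\|}\qquad\text{for every }v\in F_x.$$
Outside any fixed tubular neighborhood $U$ of $\sing(X)\cap\Lambda$ this produces a uniform lower bound on $m(DX_t|_{F_x})$ for unit vectors $v$ making a definite angle with $X(x)$, enough to give domination along orbit segments entirely outside $U$.

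The principal obstacle, and the role of the singular-domination hypothesis \eqref{eq3}, is to handle orbit segments that enter $U$, where $\|X\|\to 0$ and the worst-contracted direction in $F$ can drift into alignment with the flow direction, making the above estimate degenerate. To address this I would use continuity of the splitting and hyperbolicity of singularities to propagate \eqref{eq3} from each $\sigma\in\sing(X)\cap\Lambda$ to a neighborhood of $\sigma$ (at the cost of a small multiplicative constant), then partition any orbit segment into alternating subintervals either contained in $U$ or disjoint from a smaller neighborhood of $\sing(X)\cap\Lambda$, applying the sectional-expansion estimate on the outside pieces and the propagated singular-domination on the inside pieces, and using the uniform angle bound $\theta_0$ to control the submultiplicative losses at the transitions. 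Concatenating the exponential decays along the subintervals yields uniform domination, contradicting the sequences $(x_n,t_n,v_n)$ and concluding the proof.
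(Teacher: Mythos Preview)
Your overall strategy matches the paper's: show $X(x)\in F_x$ (this is the paper's Lemma~\ref{le:flow-center}, with essentially your argument), use sectional expansion on $2$-planes containing the flow direction to control $m(DX_t|_{F_x})$ away from singularities, and appeal to~\eqref{eq3} near them.

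The genuine gap is the near-singularity step. You propose to ``use continuity of the splitting\ldots to propagate~\eqref{eq3} from each $\sigma$ to a neighborhood,'' but continuity is \emph{not} among the hypotheses---only that the angle is bounded away from zero (continuity would follow from domination via Proposition~\ref{pr:domination-continuity}, which is what you are proving). Without it there is no a~priori relation between $E_x,F_x$ and $E_\sigma,F_\sigma$ for $x$ near $\sigma$, so the inequality~\eqref{eq3}, which concerns only the single fiber at $\sigma$, does not transfer to nearby fibers by perturbation. Your concatenation scheme therefore has no estimate to feed in on the inside pieces, and the sentence ``using the uniform angle bound $\theta_0$ to control the submultiplicative losses'' does not address this: the angle bound controls projections within a fixed fiber, not the drift of the fibers themselves as the base point varies.

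The paper resolves this differently. It runs the contradiction from a single regular base point $x_0$: the worst-contracted unit vector $w_n\in F_{x_0}$ is either transverse to $X(x_0)$ infinitely often---then your sectional-expansion inequality already gives $|\det(DX_{t_n}|_{L_n})|\to 0$, a contradiction---or eventually parallel to it, which forces $x_n:=X_{t_n}(x_0)\to\sigma\in\sing(X)$. In the latter case the paper fixes a cross-section $\Sigma$ through a point $y\in W^s_{loc}(\sigma)$, lets $y_n\in\Sigma$ be the last crossing of the orbit before time $t_n$, and factors the non-domination product at $y_n$. If the failure persists on the segment from $x_0$ to $y_n$ one reruns the whole argument with $y_n$ in place of $x_n$, contradicting regularity of $y=\lim y_n$. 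Otherwise the failure lives on the segment from $y_n$ to $x_n$, which lies entirely inside a Hartman--Grobman linearizing chart for $\sigma$, and the paper invokes a tailor-made lemma (Lemma~\ref{le:domination-singularity}): under~\eqref{eq3}, any sequence of unit vectors in $T_\sigma M$ contracted by $e^{A\tau_n}$ at the top rate of $A|_{E_\sigma}$ must accumulate on $E_\sigma$. The vectors in question are the linearized flow directions at $y_n$, which converge to the nonzero vector corresponding to $X(y)\in F_y$, and this is the contradiction. Lemma~\ref{le:domination-singularity} is precisely the device that extracts the content of~\eqref{eq3} at the singular fiber without ever comparing $E_x,F_x$ to $E_\sigma,F_\sigma$; it is the ingredient your propagation step is missing.
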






As a corollary, we obtain sufficient conditions to obtain
hyperbolicity in the non-singular case using the hyperbolic
lemma.

\begin{corollary}
  \label{mthm:domination-no-sing}
  Let $\Lambda$ be a compact invariant set without
  singularities for the vector field $X$.  Suppose that
  there exists a continuous $DX_t$-invariant decomposition
  of the tangent bundle of $\Lambda$, $T_{\Lambda}M = E
  \oplus F$, where $E$ contracts and $F$ is sectionally
  expanding. Then $\Lambda$ is a hyperbolic set.
\end{corollary}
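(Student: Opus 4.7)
The strategy is to reduce the statement to the main theorem plus the Hyperbolic Lemma. Since $\Lambda$ contains no singularities, the hypothesis that every singularity in $\Lambda$ is hyperbolic and the dominating inequality \eqref{eq3} at singularities are both satisfied vacuously. Consequently, every hypothesis of Theorem~\ref{mthm:domination} is in force: we have a compact invariant set carrying an invariant splitting $T_\Lambda M=E\oplus F$ with angle bounded away from zero, where $E$ is uniformly contracted and $F$ is sectionally expanding.

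Applying Theorem~\ref{mthm:domination} therefore yields that $E\oplus F$ is in fact a dominated splitting. Combined with the uniform contraction of $E$, this says that $\Lambda$ is partially hyperbolic in the sense defined above, with central subbundle $F$. Since $F$ is moreover sectionally expanding and $\Lambda\cap\sing(X)=\emptyset$ (so that the condition on singularities in Definition~\ref{sechypset} is trivially met), $\Lambda$ is a sectional-hyperbolic set.

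Finally, $\Lambda$ is a compact invariant subset of the sectional-hyperbolic set $\Lambda$ itself that contains no singularities, so by the Hyperbolic Lemma~\ref{pr:sec-hyp-no-sing} the set $\Lambda$ is hyperbolic, as desired. There is no real obstacle here: the entire content of the corollary is bundling Theorem~\ref{mthm:domination} with Lemma~\ref{pr:sec-hyp-no-sing}, using the absence of singularities to trivialise the remaining hypotheses.
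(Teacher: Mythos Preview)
Your proof is correct and follows exactly the approach the paper takes: the paper's own proof is a single sentence stating that the corollary follows from Theorem~\ref{mthm:domination} and the Hyperbolic Lemma, and you have simply spelled out those two applications explicitly.
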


We note that in Definition \ref{sechypset} domination is
required. As a consequence of Theorem~\ref{mthm:domination},
the domination assumption is only necessary at the
singularities, so that we obtain the following equivalent
definition of sectional-hyperbolicity.

\begin{definition}\label{newdefinition-sechyp}
  A compact invariant set $\Lambda \subset M$ is a
  sectional-hyperbolic set for $X$ if all singularities in
  $\Lambda$ are hyperbolic, there exists a continuous
  $DX_t$-invariant splitting of the tangent bundle on
  $T_{\Lambda}M = E \oplus F$ with constants $C,\lambda > 0$
  such that for every $x \in \Lambda$ and every $t>0$ we
  have
\begin{enumerate}
\item $\Vert DX_t\vert_{E_x} \Vert \leq C e^{- \lambda t}$;
\item $\vert \det DX_t \vert_{L_x}\vert > C^{-1} e^{\la t}$,
  for every two-dimensional linear subspace $L_x \subset
  F_x$;
\item $\|DX_t\vert_{E_\sigma}\|\cdot\|DX_{-t}\vert_{F_\sigma}\|
  < C e^{-\lambda t} $ for all $\sigma\in\Lambda\cap\sing(X)$.
\end{enumerate}
\end{definition}



\subsection{Hyperbolicity versus sectional-expansion}
\label{sec:hyperb-versus-sectio}

We present here some motivation for
Theorem~\ref{mthm:domination}. We observe that, for a
hyperbolic set, both splittings $(E^s\oplus E^X)\oplus E^u$
and $E^s\oplus (E^X\oplus E^u)$ are dominated. Moreover, it
is easy to see that a hyperbolic set $\Lambda$ has no
singularities accumulated by regular orbits within
$\Lambda$. Indeed, any singularity $\sigma$ accumulated by
regular orbits on a hyperbolic set $\Lambda$ would be a
discontinuity point for the hyperbolic splitting, due to the
absence of the flow direction at $\sigma$.

Our next result shows that we cannot have a hyperbolic
splitting without the flow direction, unless we restrict
ourselves to finitely many singularities, all of them
isolated on the nonwandering set.

\begin{theorem}\label{mthm:hyperbolic-no-center}
  Let $\Lambda$ be a compact invariant set of $X$. Suppose
  that there exists a continuous $DX_t$-invariant
  decomposition of the tangent bundle $T_{\Lambda}M = E
  \oplus F$ over $\Lambda$ and constants $C,\lambda > 0,$
  such that for every $x \in \Lambda$ and all $t>0$
\begin{align*}
\Vert DX_t\vert_{E_x}\Vert \leq Ce^{-\lambda t}
\quad\textrm{and}\quad
 \Vert DX_{-t}\vert_{F_{X_t(x)}}\Vert \le Ce^{-\lambda t}.
\end{align*}
Then $\Lambda$ consists of finitely many hyperbolic
singularities.
\end{theorem}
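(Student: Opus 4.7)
The plan is to feed the vector field itself through the splitting and derive a contradiction from the simultaneous forward contraction on $E$ and backward contraction on $F$. For each $x\in\Lambda$ I would write $X(x)=X^E(x)+X^F(x)$ with $X^E(x)\in E_x$ and $X^F(x)\in F_x$. The flow invariance $DX_t\cdot X(x)=X(X_t(x))$, together with the $DX_t$-invariance of $E$ and $F$ and the uniqueness of the decomposition, forces
\[
X^E(X_t(x))=DX_t\cdot X^E(x),\qquad X^F(X_t(x))=DX_t\cdot X^F(x),
\]
so each component evolves under the corresponding restriction of the flow.

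Next I would exploit the bounded-angle hypothesis to control the sizes of these components. Since $\sin\angle(E_x,F_x)\ge\theta_0$, the projection onto $E_x$ parallel to $F_x$ has norm at most $\theta_0^{-1}$, so $\|X^E(y)\|\le\theta_0^{-1}\|X(y)\|$; by continuity of $X$ and compactness of $\Lambda$, both $X^E$ and $X^F$ are uniformly bounded on $\Lambda$, say by some $K>0$. The uniform contraction of $E$ yields, by inversion of the operator $DX_t\colon E_x\to E_{X_t(x)}$, the backward expansion $\|DX_{-t}w\|\ge C^{-1}e^{\lambda t}\|w\|$ for $w\in E_{X_t(y)}$ and $t\ge 0$. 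Applied to $w=X^E(x)$ this gives $\|X^E(X_{-t}(x))\|\ge C^{-1}e^{\lambda t}\|X^E(x)\|$, which contradicts the bound $K$ unless $X^E(x)=0$. A symmetric argument on the $F$ side uses $\|DX_t v\|\ge C^{-1}e^{\lambda t}\|v\|$ for $v\in F_x$ and the forward boundedness of $X^F$ to force $X^F(x)=0$. Hence $X\equiv 0$ on $\Lambda$, i.e., $\Lambda\subset\sing(X)$.

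It remains to show that each $\sigma\in\Lambda$ is hyperbolic and isolated. Since $X(\sigma)=0$ the derivative cocycle is the one-parameter group $DX_t(\sigma)=\exp(t\,DX(\sigma))$, which contracts $E_\sigma$ uniformly and expands $F_\sigma$ uniformly. Thus $DX(\sigma)|_{E_\sigma}$ has spectrum in the open left half-plane and $DX(\sigma)|_{F_\sigma}$ in the open right half-plane; in particular $DX(\sigma)$ has no purely imaginary eigenvalue and is invertible. The implicit function theorem then isolates $\sigma$ in $\sing(X)$, hence in $\Lambda$, and a compact set all of whose points are isolated must be finite.

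The argument is quite direct, and I do not foresee a real obstacle; the conceptual heart is the observation that $X$ itself, viewed as a continuous section of $T_\Lambda M$, is simultaneously trapped by the forward-boundedness of $X^E$ and the backward-boundedness of $X^F$, which is impossible in the presence of a regular orbit. The one place that requires a moment of care is passing from the operator-norm contraction $\|DX_t|_{E_y}\|\le Ce^{-\lambda t}$ to the dual lower bound on $DX_{-t}|_E$, which is the mechanism that turns a uniform forward estimate into a contradiction from the past.
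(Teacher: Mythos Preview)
Your proof is correct and follows essentially the same approach as the paper: decompose $X(x)$ along $E_x\oplus F_x$, use the bounded-angle hypothesis to bound the components uniformly, and derive a contradiction from the backward expansion of the $E$-component and the forward expansion of the $F$-component. The only difference is structural: the paper packages the argument ``$E$ uniformly contracted $\Rightarrow X(x)\in F_x$'' as a separate lemma (Lemma~\ref{le:flow-center}) and then invokes it twice (for $X_t$ and for the reversed flow $X_{-t}$), whereas you inline the same computation directly; your version is in fact slightly more streamlined since you use the uniform bound on the projections immediately rather than passing to a limit point along a subsequence.
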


We cannot replace the assumptions on
Theorem~\ref{mthm:hyperbolic-no-center} either by
sectional-expansion or by sectional-contraction, as the
following examples show.

\begin{example}\label{ex:Lorenz-like}
  Consider a Lorenz-like singularity $\sigma$ for a $C^1$
  flow $\{X_t\}_{t\in\R}$ on a $3$-manifold $M$, that is,
  $\sigma$ is a hyperbolic singularity of saddle-type such
  that the eigenvalues of $DX(\sigma)$ are real and satisfy
  $ \lambda_2 < \lambda_3 < 0 < -\lambda_3 < \lambda_1$ and
  $\lambda_1+\lambda_2>0$.

  Let $E_i$ be the eigenspace associated to the eigenvalue
  $\lambda_i$, $i=1,2,3$, and set $E=E_3$ and $F=E_1\oplus
  E_2$. Then the decomposition is trivially continuous, not
  dominated ($F$ admits vectors more sharply contracted than
  those of $E$) but $E$ uniformly contracts lengths of
  vectors and $F$ uniformly expands area, that is, $F$ is
  sectionally-expanded.
\end{example}

\begin{example}
  \label{ex:generalized-Lorenz-like}
  Consider a hyperbolic saddle singularity, $\sigma$, for a
  $C^1$ flow $\{X_t\}_{t\in\R}$ on a $4$-manifold $M$ such
  that the eigenvalues of $DX(\sigma)$ are real and satisfy
  \begin{align*}
    \lambda_2 < \lambda_3 < 0 < \lambda_4 < \lambda_1, \quad
    \lambda_1+\lambda_3>0 \quad\text{and}\quad \lambda_2+\lambda_4<0.
  \end{align*}
  Let $E_i$ be the eigenspace associated to the eigenvalue
  $\lambda_i$, $i=1,2,3,4$, and set $F=E_2\oplus E_4$ and
  $E=E_1\oplus E_3$. Then the decomposition is trivially
  continuous, not dominated as before, $E$ is uniformly area
  contracting, since $DX_{-t}\mid_E$ expands area for $t>0$;
  and $F$ is uniformly area expanding. In other words, $F$
  is sectionally-expanded and $E$ is sectionally-contracted.
\end{example}

In both examples above we have sectional-expansion and
sectional-contraction along the subbundles of a continuous
splitting but the splitting is not dominated.  These
examples involve a trivial invariant set: an equilibrium
point.  But there are examples with compact invariant sets
having singularities accumulated by regular orbits and also
with a dense regular orbit; see Subsection~\ref{examples}.

This suggests that Theorem~\ref{mthm:hyperbolic-no-center}
might be generalized if we assume domination at the
singularities of $\Lambda$ together with sectional-expansion
along $F$ and uniform contraction along $E$ over regular
orbits. This is precisely the content of
Theorem \ref{mthm:domination}.


\section{Applications and other examples}
\label{applic-examp}

To present the next result characterizing
sectional-hyperbolic sets through sectional Lyapunov
exponents, we first have to present more definitions.


We say that a probability measure $\mu$ is $X$-invariant if
$\mu(X_t(U))=\mu(U)$, for every measurable subset $U\subset
M$ and every $t\in \R$. Given a compact $X$-invariant set
$\Lambda$ we say that a subset $Y\subset M$ is a \emph{total
  probability subset} of $\Lambda$ if $\mu(Y)=1$ for every
$X$-invariant measure $\mu$ supported in $\Lambda$. We note
that \emph{each singularity $\sigma$ of $\Lambda$ belongs to $Y$}
since $\mu=\delta_\sigma$ is a $X$-invariant probability
measure.

Let $A:E\times\R\to E$ be a Borel measurable map given by a
collection of linear bijections
\begin{align*}
  A_t(x): E_x\to E_{X_t(x)}, \quad x\in M, t\in\R,
\end{align*}
where $M$ is the base space (we assume it is a manifold) of
the finite dimensional vector bundle $E$, satisfying the
cocycle property
\begin{align*}
  A_0(x)=Id, \quad A_{t+s}(x)=A_t(X_s(x))\circ A_s(x), \quad x\in M, t,s\in\R,
\end{align*}
with $\{X_t\}_{t\in\R}$ a smooth flow over $M$.  We note
that for each fixed $t>0$ the map $A_t: E\to E, v_x\in E_x
\mapsto A_t(x)\cdot v_x\in E_{X_t(x)}$ is an automorphism of
the vector bundle $E$.

The natural example of a linear multiplicative cocycle over
a smooth flow $X_t$ on a manifold is the derivative cocycle
$A_t(x)=DX_t(x)$ on the tangent bundle $TM$ of a finite
dimensional compact manifold $M$. 

A concrete instance of this structure we will use is the
derivative cocycle $A_t(x)=DX_t\mid E_x$ restricted to a
continuous $DX_t$-invariant subbundle $E$ of the tangent
bundle $TM$.

According to the Multiplicative Ergodic Theorem of Oseledets
\cite{arnold-l-1998,BarPes2007}, since in this setting we
have that $\sup_{-1\le t\le1}\log\|A_t(x)\|$ is a bounded
function of $x\in \Lambda$, there exists a subset $R$ of $\Lambda$
with total probability such that for every $x \in R$ there
exists a splitting
 \begin{align}\label{oseledec-split}
   E_x = E_1(x) \oplus \cdots \oplus E_{s(x)}(x)
 \end{align}
 which is $DX_t$-invariant and the following limits, known
 as the \emph{Lyapunov exponents at} $x$, exist
\begin{align*}
\lambda_i(x) = \lim_{t \to \pm\infty} \frac{1}{t} \log \Vert DX_t(x) \cdot v\Vert,
\end{align*}
for every $v \in E_i(x) \setminus \{ 0 \}, i = 1, \dots,
s(x)$. We order these numbers as
$\lambda_1(x)<\dots<\lambda_{s(x)}(x)$. One of these
subbundles is given by the flow direction (at non-singular
points of the flow) and the corresponding Lyapunov exponent
is zero for almost every point.

The functions $s$ and $\lambda_i$ are measurable and
invariant under the flow, i.e., $s(X_t(x)) = s(x)$ and
$\lambda_i (X_t(x)) = \lambda_i (x)$ for all $x\in R$ and
$t\in\R$. The splitting \eqref{oseledec-split} also depends
measurably on the base point $x\in R$. If $F$ is a
measurable subbundle of the tangent bundle then by ``the
Lyapunov exponents of $E$'' we mean the Lyapunov exponents
of the nonzero vectors in $F$.

Given a vector space $E$, we denote by $\wedge^2E$ the
second exterior power of $E$, defined as follows. For a basis
$v_1,\dots, v_n$  of $E$, then $\wedge^2E$ is
generated by $\{v_i\wedge v_j\}_{i\neq j}$. Any linear
transformation $A:E\to F$ induces a transformation
$\wedge^2A:\wedge^2E\to\wedge^2F$. Moreover, $v_i\wedge v_j$
can be viewed as the $2$-plane generated by $v_i$ and $v_j$ if
$i\neq j$; see for instance \cite{arnold-l-1998} for more
information.

In \cite{arbieto2010}, a notion of sectional Lyapunov
exponents was defined and
a characterization of sectional-hyperbolicity
was obtained based on this notion.

\begin{definition}\cite[Definition 2.2]{arbieto2010}
\label{def-sec-lyap-exp}
Given a compact invariant subset $\Lambda$ of $X$ with a
$DX_t$-invariant splitting $T_\Lambda M=E\oplus F$, the
\emph{sectional Lyapunov exponents} of $x$ along $F$ are the
limits
$$\lim_{t\to+\infty}\frac1t
\log\|\wedge^2DX_t(x)\cdot\wt{v}\|$$ whenever they exists,
where $\wt{v}\in \wedge^2F_x-\{0\}$.
\end{definition}

As explained in \cite{arbieto2010}, if
$\{\lambda_i(x)\}_{i=1}^{s(x)}$ are the Lyapunov exponents,
then the sectional Lyapunov exponents at a point $x\in R$
are $\{\lambda_i(x)+\lambda_j(x)\}_{1\leq i<j\leq s(x)}$ and
they represent the asymptotic sectional-expansion in the $F$
direction.

As an application of Theorem~\ref{mthm:domination}, we have
the following result, which is an extension of the main
result in \cite{arbieto2010} assuming continuity of the
splitting, instead of a dominated splitting.


\begin{maintheorem}\label{mthm:sec-lyap-exp}
   Let $\Lambda$ be a compact invariant set for
  a $X$ such that every singularity
  $\sigma\in\Lambda$ is hyperbolic. Suppose that there is a continuous $DX_t$-invariant splitting
  $T_{\Lambda}M=E\oplus F$ such that $T_{\sigma}M = E_{\sigma} \oplus F_{\sigma}$
  is dominated, for every singularity $\sigma\in\Lambda$.

  If the Lyapunov exponents in the $E$ direction are
  negative and the sectional Lyapunov exponents in the $F$
  direction are positive on a set of total probability
  within $\Lambda$, then the splitting is dominated and
  $\Lambda$ is a sectional hyperbolic set.
\end{maintheorem}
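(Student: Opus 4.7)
The strategy is to reduce to Theorem \ref{mthm:domination}. Three of its hypotheses are already in hand: the splitting $E\oplus F$ is continuous on the compact set $\Lambda$, so its angle is bounded away from zero (cf.\ the remark following Theorem \ref{mthm:domination}); and domination at the singularities of $\Lambda$ is assumed. What remains is to prove, from the Lyapunov-exponent hypothesis, that $E$ is uniformly contracted and that $F$ is sectionally expanding. Once these are established, Theorem \ref{mthm:domination} yields that $E\oplus F$ is dominated, and then Definition \ref{newdefinition-sechyp} identifies $\Lambda$ as a sectional-hyperbolic set.

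For uniform contraction of $E$, I would apply a subadditive variational principle. The sequence $a_n(x)=\log\|DX_n|_{E_x}\|$ is continuous in $x$ and satisfies $a_{n+m}\le a_n+a_m\circ X_n$, so for any $X$-invariant probability $\mu$ on $\Lambda$, Kingman's theorem identifies $\lim_n a_n/n$ $\mu$-a.e.\ with the top Lyapunov exponent of $DX_t|_E$, which is negative by hypothesis; hence $\inf_n n^{-1}\!\int a_n\,d\mu<0$ for every $\mu$. Weak-$*$ compactness of the space of invariant probabilities on $\Lambda$, combined with the variational principle for continuous subadditive potentials (Schreiber/Sturman--Stark/Morris), then gives
\[
\lim_{n\to\infty}\,\sup_{x\in\Lambda}\frac{a_n(x)}{n}\;=\;\sup_{\mu}\,\lim_{n\to\infty}\frac1n\!\int a_n\,d\mu\;<\;0.
\]
This produces $N\in\mathbb{N}$ and $\eta>0$ with $\|DX_N|_{E_x}\|<e^{-\eta N}$ for every $x\in\Lambda$; iterating and using boundedness of $DX_t$ on $[0,N]\times\Lambda$ upgrades this to uniform exponential contraction for all $t>0$.

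Sectional expansion of $F$ is obtained by the same argument applied to $b_n(x)=\log\|(\wedge^2DX_n|_{\wedge^2F_x})^{-1}\|$, which is again continuous in $x$ and subadditive. For each invariant measure $\mu$, the $\mu$-a.e.\ limit $b_n/n$ is the negative of the smallest Lyapunov exponent of $\wedge^2DX_t|_{\wedge^2 F}$; by the identification of sectional Lyapunov exponents with the sums $\lambda_i(x)+\lambda_j(x)$ recalled after Definition \ref{def-sec-lyap-exp}, this smallest exponent is precisely the smallest sectional Lyapunov exponent along $F$, which is positive $\mu$-a.e.\ by hypothesis. The same variational principle therefore provides uniform $T>0,\,\lambda'>0$ such that $|\det(DX_T|_{L_x})|>e^{\lambda' T}$ for every $x\in\Lambda$ and every two-plane $L_x\subset F_x$; this is exactly condition \eqref{eq:def-sec-exp}.

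With $E$ uniformly contracted, $F$ sectionally expanding, the splitting continuous with angles bounded below, and domination at the singularities, Theorem \ref{mthm:domination} yields that $E\oplus F$ is dominated, so $\Lambda$ is sectional-hyperbolic in the sense of Definition \ref{newdefinition-sechyp}. The most delicate step is the subadditive variational principle: the $\mu$-a.e.\ sign of Lyapunov exponents (for every $\mu$) must be upgraded to a uniform exponential rate \emph{without} the a priori dominated splitting used in \cite{arbieto2010}. This upgrade goes through precisely because both cocycles are built from continuous invariant subbundles on the compact set $\Lambda$, so the potentials $a_n,b_n$ are genuinely continuous in $x$.
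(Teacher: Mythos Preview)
Your proposal is correct and follows essentially the same route as the paper: reduce to Theorem~\ref{mthm:domination} by upgrading the Lyapunov-exponent hypotheses to uniform contraction on $E$ and uniform sectional expansion on $F$ via subadditive cocycles $\phi_t(x)=\log\|DX_t|_{E_x}\|$ and $\psi_t(x)=\log\|\wedge^2DX_{-t}|_{F_{X_t(x)}}\|$. The only cosmetic difference is that the paper invokes this upgrade as a black box (Proposition~3.4 of \cite{arbieto2010}, restated here as Proposition~\ref{prop3.4-arbieto}), while you spell out the mechanism via the subadditive variational principle and weak-$*$ compactness; these are the same tool, and your remark that continuity of the subbundles is what makes the potentials continuous is exactly the point that allows the argument of \cite{arbieto2010} to go through without assuming domination a priori.
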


We now apply the main result to the setting of weakly
dissipative three-dimensional flows.

We recall that an open subset $U$ is a \emph{trapping
  region} if $\overline{X_t(U)}\subset U, t>0$ and a compact
invariant subset $\Lambda$ is \emph{attracting} if it is the
maximal invariant subset
$\Lambda(U)=\overline{\cap_{t\ge0}X_t(U)}$ inside the
trapping region $U$. We say that a compact invariant subset
$\Lambda$ for the flow of the vector field $X$ is
\emph{weakly dissipative} if $\diver(X)(x)\le0$ for all
$x\in\Lambda$, that is, the flow near $\Lambda$
infinitesimally does not expands volume. 

We also say that a hyperbolic singularity $\sigma$ of $X$ is
\emph{$C^1$-linearizable} if there exists a diffeomorphism
$h:V_\sigma\to B$ from an open neighborhood of $\sigma\in M$
to an open neighborhood $B$ of the origin in $\R^3$ such
that $h(X_t(x))=e^{tA}\cdot h(x), X_s(x)\in V_\sigma$ for
$|s|\le t$ and $A=DX(\sigma)$. For this it is enough that
the spectrum of $DX(\sigma)$ satisfies a finite number of
non-resonance conditions; see e.g. \cite{St58}.  Hence this
is a $C^r$-generic condition for all sufficiently big
$r\ge1$.

\begin{maintheorem}
  \label{mthm:unifexp3d-hyp}
  Let $X$ be a $C^1$ vector field on a three-dimensional
  manifold $M$ admitting a trapping region $U$ whose
  singularities (if any) are hyperbolic. Let us assume that
  the compact invariant subset $\Lambda=\Lambda(U)$  is
  weakly dissipative and endowed with a one-dimensional
  continuous field $F$ of asymptotically backward
  contracting directions, that is, $x\in\Lambda\mapsto F_x$
  is continuous and for each $x\in\Lambda$, $F_x$ is a
  one-dimensional subspace of $T_xM$, and also
    \begin{align}\label{eq:posLyap}
      \liminf_{t\to+\infty}\frac1t\log\|DX_{-t}\mid F_x\|<0
      \quad\text{for all}\quad x\in\Lambda.
    \end{align}
    If at each $\sigma\in U$ there exists a complementary
    $DX_t$-invariant direction $E_\sigma$ such that
    $E_\sigma\oplus F_\sigma=T_\sigma M$ is a dominated
    splitting, then $\Lambda$ is a hyperbolic set (in
    particular, $\Lambda$ has no singularities).
\end{maintheorem}

Since we use the attracting and dominated splitting
assumption only to prove the non-existence of singularities
in $\Lambda$, as a consequence of the proof we obtain the
following.

\begin{corollary}
  \label{cor:unifexp3d-hyp}
  Let $X$ be a $C^1$ vector field on a three-dimensional
  manifold $M$ admitting 
  compact invariant subset $\Lambda$, without singularities,
  which is weakly dissipative and endowed with a
  one-dimensional continuous field $F$ of asymptotically
  backward contracting directions.

  Then $\Lambda$ is a hyperbolic set.
\end{corollary}

These results are extremely weak versions of the following
conjecture of Viana, presented in~\cite{Vi98}. 
\begin{conjecture}\label{conj:viana}
  If an attracting set $\Lambda(U)$ of smooth map/flow has a
  non-zero Lyapunov exponent at Lebesgue almost every point
  of its isolating neighborhood $U$, that is 
    \begin{align}\label{eq:posLyapLebqtp}
      \limsup_{t\to+\infty}\frac1t\log\|DX_t\|>0
      \quad\text{for Lebesgue almost every}\quad x\in U,
    \end{align}
    then $\Lambda(U)$ has a \emph{physical measure}: there
    exists an invariant probability measure $\mu$ supported
    in $\Lambda(U)$ such that for all continuous functions
    $\vfi:U\to\R$
    \begin{align*}
      \lim_{t\to+\infty}\frac1t\int_0^t\vfi(X_t(x))\,dt=\int\vfi\,d\mu
      \quad\text{for Lebesgue almost every}\quad x\in U.
    \end{align*} 
\end{conjecture}
Indeed, if $U$ is a trapping region and $\Lambda=\Lambda(U)$
satisfies the assumptions of
Theorem~\ref{mthm:unifexp3d-hyp}, then $\Lambda$ is a
hyperbolic attracting set, for which it is well-known that
(\ref{eq:posLyapLebqtp}) is true and there exists some
physical measure; see e.g. \cite{BR75,Bo75,BDV2004}.


%



We now provide several examples which clarify the role of the
assumption of domination at the singularities in
Theorem~\ref{mthm:domination} and the relations between
dominated splittings for diffeomorphisms and flows.

\begin{proposition}
\label{t.examples}
There exist the following examples of vector fields and
flows.
\begin{enumerate}
\item A vector field with an invariant and compact set
  containing singularities accumulated by regular orbits
  inside the set, with a non-dominated and continuous
  splitting of the tangent space, but satisfying uniform
  contraction and sectional-expansion.
\item A vector field with an invariant and compact set with
  a continuous and invariant splitting $E\oplus F$ such that
  $E$ is uniformly contracted , $F$ is area expanding but it
  is not dominated.
\item A suspension flow whose base map has a dominated splitting but the
flow does not admit any dominated splitting.
\end{enumerate}
\end{proposition}

Item $(3)$ above is based on a diffeomorphism described in
\cite{BoV00} and similar to another suggested by Pujals in
\cite[Example B.12]{BDV2004}.

An important remark is that our results are not valid for
diffeomorphisms, as the following result shows. 

\begin{proposition}
\label{diffeo-case}
There exists a transitive hyperbolic set for a
diffeomorphism with a non-dominated splitting which is
sectionally-expanding and sectionally-contracting.
\end{proposition}


\section{Proof of Theorem \ref{mthm:sec-lyap-exp}}
\label{sec:pf-sec-lyap}

We follows the lines of
\cite{arbieto2010}.  The following proposition, whose proof
can be found in \cite{arbieto2010}, is the main auxiliary
result in the proof.

We fix a compact $X_t$-invariant subset $\Lambda$.  We say
that a family of functions $\{f_t:\Lambda\to \R\}_{t\in \R}$
is sub-additive if for every $x\in M$ and $t,s\in \R$ we
have that $f_{t+s}(x)\leq f_s(x)+f_t(X_s(x))$. The
Subadittive Ergodic Theorem (see e.g. \cite{Wa82}) shows
that the function
$\overline{f}(x)=\liminf\limits_{t\to+\infty}\frac{f_t(x)}{t}$
coincides with
$\wt{f}(x)=\lim\limits_{t\to+\infty}\frac{1}{t}f_t(x)$ in a
set of total probability in $\Lambda$.

\begin{proposition}
\label{prop3.4-arbieto}
Let $\{t\mapsto f_t:\Lambda\to \R\}_{t\in \R}$ be a
continuous family of continuous function which is
subadditive and suppose that $\ov{f}(x)<0$ in a set of
total probability. Then there exist constants $C>0$ and
$\lambda<0$ such that for every $x\in \Lambda$ and every
$t>0$ we have
$\exp(f_t(x))\leq C \exp(\frac{\lambda t}{2}).$
\end{proposition}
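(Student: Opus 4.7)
The plan is to establish that $L := \lim_{t\to\infty}\frac{1}{t}\sup_{x\in\Lambda}f_t(x) < 0$; the stated conclusion then follows since $F(t):=\sup_x f_t(x)$ is subadditive in $t$ (so the limit exists in $[-\infty,\infty)$ by Fekete's lemma), and $L<0$ gives $F(t)\le Lt/2+C_0$ for all large $t$, which combined with boundedness of $F$ on bounded time intervals (by continuity of $f_t$ on compact $\Lambda$) yields $\exp f_t(x)\le C\exp(\lambda t/2)$ with $\lambda=L<0$ and a suitable $C>0$.

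The first step is to convert the pointwise hypothesis $\ov f<0$ on a total probability set into strict negativity of $\int \wt f\,d\mu$ for every $X_t$-invariant probability measure $\mu$: this is immediate from the Subadditive Ergodic Theorem, which gives $\int\wt f\,d\mu = \inf_t\frac{1}{t}\int f_t\,d\mu$ and $\wt f = \ov f < 0$ $\mu$-a.e. on the total-probability set. So for each such $\mu$ one may choose $t_\mu>0$ and $\alpha_\mu>0$ with $\frac{1}{t_\mu}\int f_{t_\mu}\,d\mu<-\alpha_\mu$. Weak-$*$ continuity of $\nu\mapsto\int f_{t_\mu}\,d\nu$ (using continuity of $f_{t_\mu}$ on compact $\Lambda$) then gives a weak-$*$ open neighborhood $U_\mu$ on which the bound persists with $-\alpha_\mu/2$. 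Weak-$*$ compactness of the space $\mathcal{M}_{\mathrm{inv}}(\Lambda)$ of $X_t$-invariant probabilities produces a finite subcover, yielding finitely many times $t_1,\ldots,t_k>0$ and a uniform $\alpha>0$ such that for every invariant $\nu$ there is an index $i$ with $\frac{1}{t_i}\int f_{t_i}\,d\nu<-\alpha$.

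To rule out $L\ge 0$ I argue by contradiction: choose $T_n\to\infty$ and $x_n\in\Lambda$ with $f_{T_n}(x_n)/T_n\to L$, form the empirical measures $\mu_n:=\frac{1}{T_n}\int_0^{T_n}\delta_{X_u(x_n)}\,du$, and pass to a weak-$*$ convergent subsequence whose limit $\mu$ is $X_t$-invariant by the Krylov--Bogolyubov argument. Picking an index $i$ with $\frac{1}{t_i}\int f_{t_i}\,d\mu<-\alpha$, weak-$*$ convergence gives $\frac{1}{t_i}\int f_{t_i}\,d\mu_n<-\alpha/2$ for large $n$. The crucial reduction controls $f_{T_n}(x_n)$ by this integral. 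Iterating subadditivity from a starting offset $v\in[0,t_i]$ gives
\begin{align*}
f_{T_n}(x_n)\le f_v(x_n)+\sum_{j=0}^{k(v)-1}f_{t_i}\!\left(X_{v+jt_i}(x_n)\right)+f_{r(v)}\!\left(X_{v+k(v)t_i}(x_n)\right),
\end{align*}
where $T_n-v=k(v)t_i+r(v)$ with $0\le r(v)<t_i$. Averaging this inequality in $v$ over $[0,t_i]$ and performing the substitution $u=v+jt_i$ inside each summand transforms the discrete sum into $\int_0^{T_n}f_{t_i}(X_u(x_n))\,du$ up to boundary errors bounded by $O(t_i M_i)$, where $M_i:=\sup\{f_r(x):x\in\Lambda,\,0\le r\le t_i\}<\infty$. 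The outcome is
\begin{align*}
t_i\,f_{T_n}(x_n)\le C_i+\int_0^{T_n}f_{t_i}(X_u(x_n))\,du = C_i+T_n\int f_{t_i}\,d\mu_n
\end{align*}
for a constant $C_i$ independent of $n$. Dividing by $t_iT_n$ and sending $n\to\infty$ gives $L\le \frac{1}{t_i}\int f_{t_i}\,d\mu<-\alpha<0$, contradicting $L\ge0$.

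The main obstacle is this offset-averaging step, whose role is to replace the discrete Birkhoff-type sum produced by iterated subadditivity with the continuous-time empirical average that converges to $\int f_{t_i}\,d\mu$. Everything else (Fekete's lemma for existence of $L$, compactness on $\mathcal{M}_{\mathrm{inv}}(\Lambda)$, invoking the Subadditive Ergodic Theorem to pass from a total-probability hypothesis to an integral condition, and turning $L<0$ into the exponential bound) is routine.
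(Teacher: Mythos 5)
The paper does not actually supply a proof of this proposition; it simply cites Arbieto's paper \cite{arbieto2010}. Your reconstruction is correct, and it follows the standard Cao/Alves--Ara\'ujo--Saussol-type argument adapted to continuous time, which is essentially what one finds in \cite{arbieto2010} (and in \cite{cao2003,alves-araujo-saussol} for the discrete-time precursor). The key components are all present and correctly assembled: Fekete's lemma applied to $F(t)=\sup_x f_t(x)$ to define $L$; Kingman's subadditive ergodic theorem to identify $\wt f=\ov f$ $\mu$-a.e.\ and to give $\int\wt f\,d\mu=\inf_t\frac1t\int f_t\,d\mu<0$ for every invariant $\mu$; the Krylov--Bogolyubov construction of an invariant limit from the empirical measures $\mu_n$; and the offset-averaging identity. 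The offset averaging is indeed the only delicate step in the passage from discrete iterated subadditivity to the continuous empirical average, and your bookkeeping (error terms of order $t_i M_i$ from the two boundary pieces plus the final segment $[T_n-t_i,T_n]$) is right, relying on joint continuity of $(t,x)\mapsto f_t(x)$ on the compact $\Lambda\times[0,t_i]$ for finiteness of $M_i$.

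One small remark: the weak-$*$ compactness step in which you extract a finite subcover and a uniform $\alpha>0$ valid for \emph{all} invariant measures is not actually used afterwards. In the contradiction argument you fix the single limit measure $\mu$ and then pick one time $t_i$ for that $\mu$; for this, the subadditive ergodic theorem alone already gives some $t$ with $\frac1t\int f_t\,d\mu<0$, with no need for uniformity over the whole simplex of invariant measures. The finite cover would be needed if one were proving uniform negativity directly rather than by contradiction, as in Cao's original exposition, but in the route you chose it is harmless redundancy rather than a gap.
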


\begin{proof}
  See \cite[Proposition 3.4]{arbieto2010}.
\end{proof}

\begin{remark}\label{rmk:Arbieto}
  In \cite{arbieto2010} only $\Lambda = M(X)$ was considered
  to conclude that $\{X_t\}_{t\in\R}$ is a sectional-Anosov
  flow. However all statements are valid for a compact
  invariant subset of $M$.
\end{remark}

\begin{proof}[Proof of Theorem~\ref{mthm:sec-lyap-exp}]
  Define the following families of continuous functions on
  $\Lambda$
\begin{align*}
  \phi_t(x)=\log \| DX_t|_{E_x}\| \quad\textrm{and}\quad
  \psi_t(x)=\log \|\wedge^2 DX_{-t}|_{F_{X_t(x)}}\|,
  \quad (x,t)\in\Lambda\times\R.
\end{align*}
Both families $\phi_t,\psi_t:\Lambda\to\R$ are easily seen
to be subadditive.  From Proposition \ref{prop3.4-arbieto}
applied to $\phi_t(x)$ and the hypothesis on the Lyapunov
exponents, there are constants $C> 0$ and $\gamma < 0$ such
that $ \exp(\phi_t(x))=\|DX_t|_{E_x}\|\leq C\exp(\gamma t)$
for all $t>0$ showing that $E$ is a contractive subbundle.

Analogously, the hypothesis on the sectional Lyapunov
exponents and Proposition \ref{prop3.4-arbieto} applied to
the function $\psi_t(x)$ provides constants $D > 0$ and
$\eta < 0$ for which $\|\wedge^2DX_{-t}|_{F_{X_t(x)}}\|\le
De^{\eta t}$, so $F$ is a sectionally expanding subbundle.
Now Theorem \ref{mthm:domination} ensures that the splitting
$E \oplus F$ is a dominated splitting.
\end{proof}


\section{Proof of Theorem \ref{mthm:domination} and Theorem \ref{mthm:hyperbolic-no-center}}
\label{sec:pf-dom-and-hypnocenter}

We need an auxiliary lemma for which the following
definition is necessary.  Given a pair of subspaces $E_x,
F_x$ of $T_xM$ such that $E_x\cap F_x=\{\vec0\}$, the angle
$\angle(E_x,F_x)$ between the two subspaces is defined by
\begin{align*}
  \sin \angle(E_x,F_x) := \frac1{\|\pi(E_x)\|}, \quad
  x\in\Lambda,
\end{align*}
where $\pi(E_x):E_x\oplus F_x\to E_x$ is the projection onto
$E_x$ parallel to $F_x$ defined in the vector space
$E_x\oplus F_x$.

We say that an invariant splitting $E\oplus F=T_\Lambda M$
of the tangent bundle over a invariant subset $\Lambda$
\emph{has angle uniformly bounded away from zero} if the
dimensions of the fibers $E_x$, $F_x$ are constant for all
$x$ in $\Lambda$ and there exists $\theta_0>0$ such that
$\sin\angle(E_x,F_x)\ge\theta_0$ for each $x$ in $\Lambda$.

The next lemma specifies the subbundle which contains the
flow direction.

\begin{lemma}
  \label{le:flow-center}
  Let $\Lambda$ be a compact invariant set for $X$.  Given
  an invariant splitting $E\oplus F$ of $T_\Lambda M$ with
  angle bounded away from zero over $\Lambda$, such that $E$
  is uniformly contracted, then the flow direction is
  contained in the $F$ subbundle, for all $x\in \Lambda$.
\end{lemma}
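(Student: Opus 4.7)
The plan is to use the invariance of the vector field under its own flow, the invariance of the splitting, and the compactness of $\Lambda$ to force the $E$-component of $X(x)$ to vanish.

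First I would dispose of the singularities trivially: if $\sigma \in \Lambda \cap \sing(X)$, then $X(\sigma) = \vec 0 \in F_\sigma$ automatically. So I only need to handle regular points $x \in \Lambda$, where I write the unique decomposition $X(x) = e(x) + f(x)$ with $e(x) \in E_x$ and $f(x) \in F_x$. The angle hypothesis is what guarantees this decomposition is well-defined and the projection $\pi_E$ onto $E$ parallel to $F$ satisfies $\|\pi_E\| \le 1/\sin\theta_0$, so
\begin{align*}
\|e(x)\| \le \frac{1}{\sin\theta_0}\,\|X(x)\| \le \frac{\sup_{y\in\Lambda}\|X(y)\|}{\sin\theta_0},
\end{align*}
which is a finite constant because $X$ is continuous and $\Lambda$ is compact. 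Thus $x \mapsto \|e(x)\|$ is uniformly bounded on $\Lambda$.

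Next I would exploit the invariance relation $DX_t \cdot X(x) = X(X_t(x))$ combined with the $DX_t$-invariance of $E$ and $F$: applying $DX_t$ to $X(x) = e(x) + f(x)$ and matching components in the (invariant, direct-sum) decomposition at $X_t(x)$ yields $e(X_t(x)) = DX_t \cdot e(x)$ and $f(X_t(x)) = DX_t \cdot f(x)$. In particular, the uniform contraction of $E$ gives, for every $t \ge 0$,
\begin{align*}
\|e(X_t(x))\| = \|DX_t \cdot e(x)\| \le C e^{-\lambda t}\,\|e(x)\|.
\end{align*}
Rewriting this along the negative orbit (replace $x$ by $X_{-t}(x)$) yields the reverse growth estimate $\|e(X_{-t}(x))\| \ge C^{-1} e^{\lambda t}\,\|e(x)\|$ for all $t \ge 0$.

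Finally, I combine the two bounds. If $e(x_0) \neq 0$ for some $x_0 \in \Lambda$, then since $\Lambda$ is $X_t$-invariant the points $X_{-t}(x_0)$ stay in $\Lambda$, and the backward estimate forces $\|e(X_{-t}(x_0))\| \to \infty$ as $t \to +\infty$, contradicting the uniform bound from the angle hypothesis. Hence $e(x) = 0$ for every $x \in \Lambda$, i.e., $X(x) \in F_x$ as claimed.

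There is no real obstacle here; the only point that requires a moment of care is that the angle bound (not just continuity of the splitting) is essential to pass from a bound on $\|X(x)\|$ to a bound on $\|e(x)\| = \|\pi_E(X(x))\|$, and it is that uniform bound together with the compactness of $\Lambda$ that defeats the exponential blow-up along the backward orbit.
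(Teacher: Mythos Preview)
Your proof is correct and follows essentially the same argument as the paper's: decompose $X(x)$ along $E\oplus F$, use invariance of the splitting together with $DX_t\cdot X(x)=X(X_t(x))$ to see that the $E$-component is carried by $DX_t\mid_E$, and then play the exponential backward growth of this component against the uniform bound coming from the angle hypothesis and compactness. Your write-up is in fact slightly more streamlined than the paper's, which phrases the contradiction as ``the angle along the backward orbit would have to tend to zero'' rather than bounding $\|e(x)\|$ uniformly at the outset.
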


\begin{proof}
  We denote by $\pi(E_x):T_x M\to E_x$ the projection on
  $E_x$ parallel to $F_x$ at $T_x M$, and likewise
  $\pi(F_x):T_x M\to F_x$ is the projection on $F_x$
  parallel to $E_x$. We note that for $x\in\Lambda$
  \begin{align*}
    X(x)=\pi(E_x)\cdot X(x) + \pi(F_x)\cdot X(x)
  \end{align*}
  and for $t\in\R$, by linearity of $DX_t$ and
  $DX_t$-invariance of the splitting $ E\oplus F$
  \begin{align*}
    DX_{t}\cdot X(x)
    &=
    DX_{t}\cdot\pi(E_x)\cdot X(x)
    +
    DX_{t}\cdot\pi(F_x)\cdot X(x)
    \\
    &=
    \pi(E_{X_{t}(x)})\cdot DX_{t} \cdot X(x)
    + \pi(F_{X_{t}(x)}) \cdot DX_{t} \cdot X(x)
  \end{align*}
  Let $z$ be a limit point of the negative orbit of $x$.
  That is, we assume without loss of generality since
  $\Lambda$ is compact, that there is a strictly increasing
  sequence $t_n\to+\infty$ such that
  $\lim\limits_{n\to+\infty}x_{n}:=\lim\limits_{n\to+\infty} X_{-t_n}(x) = z$. Then
  $z\in\Lambda$ and, if $\pi(E_x)\cdot X(x)$ is
    not the zero vector, we get
  \begin{align}
   \lim\limits_{n\to+\infty} DX_{-t_n}\cdot X(x)
    &=
    \lim\limits_{n\to+\infty} X(x_n) = X(z), \quad\text{but
      also}
    \nonumber
    \\
   \lim\limits_{n\to+\infty} \|DX_{-t_n}\cdot\pi(E_x)\cdot X(x)\|
    &\ge
   \lim\limits_{n\to+\infty} c^{-1} e^{\lambda t_n}\|\pi(E_x)\cdot
    X(x)\| = +\infty.
    \label{eq:expand}
  \end{align}
  This is possible only if the angle between
  $E_{x_n}$ and $F_{x_n}$ tends to zero when
  $n\to+\infty$.

  Indeed, using the Riemannian metric on $T_y M$, the angle
  $\alpha(y)=\alpha(E_y,F_y)$ between $E_y$ and $F_y$ is related to
  the norm of $\pi(E_y)$ as follows:
  $\|\pi(E_y)\|=1/\sin(\alpha(y))$. Therefore
  \begin{align*}
    \|DX_{-t_n}\cdot\pi(E_x)\cdot X(x)\|
    &=
    \|\pi(E_{x_n})\cdot DX_{-t_n} \cdot X(x) \|
    \\
    &\le
    \frac1{\sin(\alpha(x_n))}
    \cdot
    \|DX_{-t_n} \cdot X(x) \|
    \\
    &=
    \frac1{\sin(\alpha(x_n))}
    \cdot \|X(x_n)\|
  \end{align*}
  for all $n\ge1$. Hence, if the sequence
  $\|DX_{-t_n}\cdot\pi(E_x)\cdot X(x)\|$ in the left hand
  side of \eqref{eq:expand} is unbounded, then
  $\lim\limits_{n\to+\infty}\alpha(X_{-t_n}(x)) = 0$.

  However, since the splitting $E\oplus F$
  has angle bounded away from zero over the
  compact $\Lambda$, we have obtained a contradiction. This
  contradiction shows that $\pi(E_x)\cdot X(x)$ is always
  the zero vector and so $X(x)\in F_x$ for all
  $x\in\Lambda$.
\end{proof}

The proof of Theorem~\ref{mthm:hyperbolic-no-center} follows
from Lemma~\ref{le:flow-center}.

\begin{proof}[Proof of
  Theorem~\ref{mthm:hyperbolic-no-center}]
  We assume, arguing by contradiction, that there exists a
  point $x \in \Lambda \setminus\sing(X)$, that is, $x$ is a
  regular point: $X(x)\neq\vec0$.  Hence
  Lemma~\ref{le:flow-center} ensures that $X(x)\in F_x$. But
  the same Lemma~\ref{le:flow-center} applied to the
  reversed flow $X_{-t}$ generated by the field $-X$ on the
  same invariant set $\Lambda$ shows that $X(x)\in
  E_x$. Thus $X(x)\in E_x\cap F_x=\{\vec0\}$. This
  contradiction ensures that $\Lambda\subset\sing(X)$. But
  our assumptions on the splitting show that each $\sigma\in
  \Lambda$ is a hyperbolic singularity: the eigenvalues of
  $DX\mid_{E_\sigma}$ are negative and the eigenvalues of
  $DX\mid_{F_\sigma}$ are positive. It is well-known that
  hyperbolic singularities $\sigma$ are isolated in the
  ambient manifold $M$; see e.g. \cite{PM82}. We conclude
  that the compact $\Lambda$ is a finite set of hyperbolic
  singularities.
\end{proof}


Before proving Theorem~\ref{mthm:domination} we need some
extra notions and definitions.

We recall that, for a invertible continuous linear operator
$L$, the minimal norm (or co-norm) is equal to $ m(L) := \|
L^{-1} \|^{-1}$. Hence condition \eqref{eq:def-dom-split}
in Definition~\ref{def1} is equivalent to
\begin{align}\label{eq:domequiv}
  \|DX_t|_{E_x}\| < Ke^{-\la t}\cdot m(DX_{t}|_{F_x}) \textrm{
    for all} \ x \in \Lambda \textrm{ and for all } t> 0.
\end{align}

The Multiplicative Ergodic Theorem of Oseledets
\cite{arnold-l-1998,BarPes2007} provides the following
properties of Lyapunov exponents and Lyapunov subspaces, in
addition to the existence of Lyapunov exponents on the total
probability subset $R$ of $\Lambda$, as presented in
Section~\ref{applic-examp}. For any pair of disjoint subsets
$I,J\subset\{1,\dots, s(x)\}$, the angle between the bundles
$E_I(x)=\bigoplus_{i\in I}E_i(x)$ and
$E_J(x)=\bigoplus_{j\in J} E_j(x)$ decreases at most
subexponentially along the orbit of $x$, that is
\begin{align*}
  \lim_{t\to\pm\infty}\frac1t\log\sin\angle(E_I(X_t(x)),E_J(X_t(x)))=0,
  \quad x\in R;
\end{align*}
which implies, in particular, that for any pair
$i,j\in\{1,\dots,s(x)\}$ with $i\neq j$ and $v_i\in
E_i(x)\setminus\{\vec0\}, v_j\in
E_j(x)\setminus\{\vec0\}$
\begin{align*}
  \lim_{t\to\pm\infty}\frac1t\log|\det(DX_t\mid \gera\{v_i,v_j\})|
  =
  \lambda_i(x)+\lambda_j(x).
\end{align*}


\begin{proof}[Proof of Theorem~\ref{mthm:domination}]
  Let $x\in R\subset\Lambda$ be a regular point of the flow
  of $X$. From Lemma~\ref{le:flow-center} we know that
  $X(x)\in F_x$. Since $F$ is a $DX_t$-invariant subbundle,
  considering the linear multiplicative cocycle
  $A_t(z)=DX_t\mid F_z$ for $z\in\Lambda, t\in\R$, there
  exists a splitting $F_x=\bigoplus_{j=1}^{s(x)}F_j(x)$ of
  $F_x$ into a direct sum of Lyapunov subspaces. One of
  these subspaces is $E^X$ generated by $X(x)\neq\vec0$,
  which we rename $F_1(x)=E^X_x$ in what follows. We also
  have the corresponding Lyapunov exponents
  $\lambda_j^F(x),j=1,\dots, s(x)$.

  Fixing $i=2,\dots, s(x)$ and $v\in
  F_i(x)\setminus\{\vec0\}$, we consider $\gera\{X(x),v\}$.
  From the assumption of sectional expansion of area
  together with the subexponential control on angles at
  regular points we obtain
  \begin{align*}
    0<\lambda\le\liminf_{t\to+\infty}\frac1t\log|\det
    (DX_t\mid\gera\{X(x),v\})|=\lambda^F_1(x)+\lambda^F_i(x)=\lambda^F_i(x).
  \end{align*}
Hence $\lambda^F_i(x)\ge\lambda>0$ for all $i=2,\dots,s(x),
x\in R$ and $X(x)\neq\vec0$.

If we consider the cocycle $B_t(z)=DX_t\mid E_z,
x\in\Lambda, t\in\R$, then we find the splitting
$E_x=\bigoplus_{i=1}^{r(x)}E_j(x)$ into Lyapunov subspaces
and the corresponding Lyapunov exponents
$\lambda_i^E(x),i=1,\dots,r(x)$. It is easy to see that the
assumption of uniform contraction along $E$ implies that
$\lambda_i^E(x)\le-\lambda<0$ for all $x\in R$ and $1\le
i\le r(x)$.

Then $\phi_t(x)=\log \frac{\| DX_t|{E_x}\|}{m(DX_t\mid
  F_x)}$ is a subadditive family of continuous functions
satisfying
\begin{align*}
  \overline{\phi}(x)
  &=
  \liminf_{t\to+\infty}\frac{\phi_t(x)}{t}\le
  \liminf_{n\to+\infty}\frac1t\log\|
  DX_t|{E_x}\|
  -\limsup_{n\to+\infty}\frac1t\log m(
  DX_t|{F_x})
  \\
  &=
  \max\{\lambda_i^E(x),1\le i\le r(x)\}
  -
  \min\{\lambda_i^F(x),1\le i\le s(x)\}
  \le
  -\lambda-0 = -\lambda
\end{align*}
for all $x\in R$ such that $X(x)\neq\vec0$.

For $x=\sigma$ for some singularity $\sigma\in\Lambda$ we
have $\overline\phi(\sigma)\le-\lambda$ as a direct
consequence of the assumption of domination at the
singularities.

We have shown that $\overline\phi(x)<0$ for all $x\in
R$. Applying Proposition~\ref{prop3.4-arbieto} we see that
there exists $C>0$ such that $\exp \phi_t(x)\le Ce^{-\lambda
  t/2}$ for all $x\in\Lambda$ and $t>0$, which gives us the
condition (\ref{eq:domequiv}). This concludes the proof of
Theorem~\ref{mthm:domination}.
\end{proof}

The proof of Corollary~\ref{mthm:domination-no-sing} follows
from Theorem~\ref{mthm:domination} and the Hyperbolic
Lemma~\ref{pr:sec-hyp-no-sing}.


\section{Proof of Theorem~\ref{mthm:unifexp3d-hyp} and
  Corollary~\ref{cor:unifexp3d-hyp}}
\label{sec:proof-theorem-3d}

We start by showing that the one-dimensional asymptotically
backward contracting subbundle $F$ over $\Lambda$ is in fact
a uniformly expanding subbundle. Indeed, we note that
$f_t(x)=\log\|DX_{-t}\mid F_x\|$ is a subadditive family of
continuous functions satisfying $\overline{f}(x)<0$ for all
$x\in\Lambda$. In particular, $\overline{f}(x)<0$ in a set
of total probability. Hence by
Proposition~\ref{prop3.4-arbieto} we obtain $f_t(x)\le
Ce^{-t\lambda/2}$ for all $t>0$ and some constants
$\lambda,C>0$. (This trivially implies $\|DX_t\mid
F_{X_t(x)}\|\ge e^{t\lambda/2}/C, x\in\Lambda, t\ge0$ and so
$\|DX_T\mid F_x\|>2$ for all $x\in\Lambda$ and $T\ge
2\log(2C)/\lambda$.)

Now we use the assumption that $\Lambda$ is an attracting
set whose possible singularities $\sigma$ are hyperbolic,
$C^1$ linearizable and admit a complementary
$DX_t$-invariant direction $E_\sigma$ such that
$E_\sigma\oplus F_\sigma=T_\sigma M$ is a dominated
splitting.

\begin{lemma}\label{le:nosing}
  The attracting set $\Lambda$ does not contain singularities.
\end{lemma}

\begin{proof}
  By the $C^1$-linearization assumption and the
  Stable/Unstable Manifold Theorems, there exists a
  corresponding $X_t$-invariant (weak- or strong-)unstable
  one-dimensional manifold $W^u_\sigma$ contained in
  $\Lambda$ (since $\Lambda$ is attracting) such that
  $F_\sigma=T_\sigma W^u_\sigma$ and
  $X_{-t}(z)\xrightarrow[t\to+\infty]{}\sigma$ for each
  $z\in W^u_\sigma$.

  \begin{claim}\label{cl:XinF}
    For all $z\in W^u_\sigma$ we have $X(z)\in F_z$.
  \end{claim}

  We note that this is trivially true for $z=\sigma$.  Let
  us assume, by contradiction, that $X(z)\notin F_z$ for
  some $z\in W^u_\sigma\setminus\{\sigma\}$ (and thus for
  all such $z$ by $DX_t$-invariance). By assumption there
  exists a diffeomorphism $h:V_\sigma\to B$ from an open
  neighborhood of $\sigma\in M$ to an open neighborhood $B$
  of the origin in $\R^3$ such that $h(X_t(x))=e^{tA}\cdot
  h(x), X_s(x)\in V_\sigma$ for $|s|\le t$ and
  $A=DX(\sigma)$. By a linear change of coordinates we can
  assume without loss of generality that $Dh(\sigma)\cdot
  F_\sigma=\R\times0^2$ and $Dh(\sigma)\cdot
  E_\sigma=0\times\R^2$.  Let $v:V_\sigma\cap\Lambda\to\R^3$
  denote the continuous unitary vector field such that
  $\gera(v_x)=Dh(x)\cdot F_x, x\in V_\sigma\cap\Lambda$.

  We are assuming that $v_z$ has a non-zero component along
  the $E_\sigma\approx0\times\R^2$ direction for $z\in
  W^u_\sigma\setminus\{\sigma\}$.  Since
  $X_{-t}(z)\xrightarrow[t\to+\infty]{}\sigma$ the
  domination of the splitting $E_\sigma\oplus F_\sigma$
  ensures that $\angle(e^{tA}\cdot
  v_z,E_\sigma)\xrightarrow[]{t\to-\infty}0$. However the
  invariance of $F$ ensures that
  $F_{X_{-t}(z)}\xrightarrow[]{t\to-\infty}F_\sigma$ and
  since $h$ is a $C^1$ conjugation we also have 
  $\angle(e^{tA}\cdot v_z,F_\sigma)\xrightarrow[]{t\to-\infty}0$.
  This contradiction proves the claim.

Using the claim together with the previous estimate on
expansion along $F$, we have that
\begin{align*}
  \infty>\sup_{x\in\Lambda}\|X(x)\|
  \ge 
  \|X(X_{kT}(z))\|
  =
  \|DX_{kT}\cdot X(z)\|\ge2^k\|X(z)\|, k\ge1
\end{align*}
which is a contradiction and completes the proof of the lemma.
\end{proof}

From now we use only the assumptions of
Corollary~\ref{cor:unifexp3d-hyp}, that is, that $\Lambda$
is a weakly dissipative compact invariant subset without
singularities having a continuous field of asymptotically
backward contracting directions.

In this setting, the Linear Poincar\'e Flow is well-defined
over $\Lambda$. If we denote by $\SO_x:T_xM\to N_x$ the
orthogonal projection from $T_xM$ to $N_x=\{v\in T_xM:
<v,X(x)>=0\}$ the orthogonal complement of the field
direction at $x\in\Lambda$, then the \emph{Linear Poincar\'e
  Flow} is given by $P^t_x=\SO_{X_t(x)}\circ DX_t:T_xM\to
N_{X_t(x)}, x\in\Lambda, t\in\R$.

We note that since the flow direction is invariant, from the
splitting $T_xM=E^X_x\oplus N_X$ we can write $DX_{t}=
\begin{pmatrix}
  \frac{\|X(X_{-t}(x))\|}{\|X(x)\|} & \star \\ 0 & P^{-t}
\end{pmatrix}$ and since $|\det DX_{-t}|\ge1$ by the weak
dissipative assumption, we get $|\det(P^{-t})| \ge
m_0=\min_{z\in\Lambda}\frac{\|X(x)\|}{\|X(X_{-t}(x))\|}>0$.

\begin{claim}\label{cl:FLPcontract}
  The subbundle $\tilde F$ of the normal bundle given by
  $\{\tilde F_x=\SO_x(F_x)\}_{x\in\Lambda}$ is
  $P^t$-invariant and uniformly backward contracting.
\end{claim}

Indeed, the continuity of the subbundle $F$ ensures that
$\angle(F_x, X(x))$ is bounded away from zero, and so there
exists $\kappa>0$ such that for every $v\in \tilde F_x$ we
have $|\beta|\le\kappa\|v\|$ such that $\beta X(x)+v=u\in
F_x$. This ensures that
\begin{align*}
  \|P^{-t}_x\cdot v\|
  &=
  \|\SO_{X_{-t}(x)}( DX_{-t}\cdot (u-\beta X(x)))\|
  \le
  \|\SO_{X_{-t}(x)}(DX_{-t}\cdot u -\beta X(X_{-t}(x)))\|
  \\
  &\le
  \|DX_{-t}\cdot u\|
  \le 
  C e^{-t\lambda/2}\|v+\beta X(x)\|
  \le 
  C e^{-t\lambda/2}(\|v\|+\kappa\|v\|\| X(x)\|)
  \\
  &\le
  C\left(1+\kappa\sup_{z\in\Lambda}\|X(z)\|\right)e^{-t\lambda/2}\|v\|
  =
  \tilde C e^{-t\lambda/2}\|v\|.
\end{align*}
This proves Claim~\ref{cl:FLPcontract}.

This is enough to guarantee the existence of a complementary
$P^{-t}$-invariant and expanding subbundle through a graph
transform technique. To this end,
let us consider the one-dimensional subbundle $G_x$
of $N_x$ orthogonal to $\tilde F_x$. We can then write
$N_x=\tilde F_x\oplus G_x$ and so, since $\tilde F$ is
$P^t$-invariant, we get $P^{-t}_x=
\begin{pmatrix}
  a_t(x)  &  b_t(x) \\ 0 & c_t(x)
\end{pmatrix}$, where $a_t(x):\tilde F_x\to \tilde F_{X_{-t}(x)},
c_t(x):G_x\to G_{X_{-t}(x)}$ and $b_t(x):G_x\to F_{X_{-t}(x)}$.
We know that $|a_t(x)|\le\tilde C e^{-t\lambda/2}$ and since
$0<m_0\le|\det(P^{-t})|= |a_t(x)|\cdot|c_t(x)|$ we obtain
$|c_t(x)|\ge m_0 e^{t\lambda/2}/\tilde C, x\in\Lambda,
t>0$. We fix $T>0$ such that $m_0 e^{T\lambda/2}/\tilde C\ge2$.

Let us now consider the family of continuous one-dimensional
complementary subspaces of $\tilde F_x$ inside $N_x$ given
by the graph of a linear map $L:G_x\to \tilde F_x$, i.e., we
consider the vector space $\SG$ of all $\SL=\{L_x:G_x\to
\tilde F_x: x\in \Lambda\}$ with the norm
$\|\SL^1-\SL^2\|=\sup_{x\in\Lambda} \|L^1_x-L^2_x\|$. We
have that $\SG$ with this norm is a Banach space since
$\Lambda$ is compact.

It is easy to see that the map $P^{-t}$ transforms the graph
of $L_x$ into the graph of $\hat L_{X_{-t}(x)}=
(P^{-t}_x\cdot L_x+\pi_{X_{-t}(x)}\cdot P^{-t})\circ
[(I-\pi_{X_{-t}(x)})\cdot P^{-t}]^{-1}$, where $\pi_z:N_z\to
\tilde F_z$ is the orthogonal projection from $N_z$ into
$\tilde F_z$ at $z\in\Lambda$. In this way we get a map
$P^{-T}:\SG\to\SG$ which we claim is a contraction with
respect to the norm given above. Indeed, for any pair
$\SL^i=\{L_x^i\}_{x\in\Lambda}, i=1,2$ we get
\begin{align*}
  \|\hat L^1_{X_{-T}(x)}-\hat L^2_{X_{-T}(x)}\|
  &=
  \|P^{-t}_x\cdot(L^1_x-L^2_x)\circ
  [(I-\pi_{X_{-t}(x)})\cdot P^{-t}]^{-1}\|
  \\
  &\le
  |a_T(x)|\cdot\|L^1_x-L^2_x\|/|c_t(x)|
  \le
  \frac12\|L^1_x-L^2_x\|
\end{align*}
for each $x\in\Lambda$. We thus have a $P^{-T}$ fixed
point $\SL^0$ defining a $P^{-T}$-invariant
subbundle $\tilde E=\{\tilde E_x=\{(u,L^0_x(u)):u\in
G_x\}_{x\in\Lambda}$ of the normal bundle. Since $P^{-T}$
commutes with each $P^s$ for all $s\in\R$, we see that
$\tilde E$ is $P^s$-invariant for all $s\in\R$.

\begin{claim}\label{cl:Einv}
  The subbundle $E=\tilde E\oplus E^X$ is $DX_t$-invariant.
\end{claim}
Indeed, for any $v=u+\beta X(x)$
with $u\in\tilde E_x,\beta\in\R$, we have $DX_t\cdot
v=DX_t\cdot u+\beta X(X_t(x))$ and $DX_t\cdot u=P^t_x\cdot
u+\gamma X(X_t(x))$ for some $\gamma\in\R$, thus $DX_t\cdot
v=P^t_x\cdot u+(\beta+\gamma) X(X_t(x))$ with $P^t_x\cdot
u\in \tilde E_{X_t(x}$, hence $DX_t\cdot v\in E_{X_t(x)}$
and the claim is proved.

At this point we have a continuous splitting $T_xM=E_x\oplus
F_x, x\in\Lambda$. Using this invariant splitting we can
write $DX_{-t}=\begin{pmatrix} A_t(x) & 0 \\ 0 & B_t(x)
\end{pmatrix}$ and the continuity of the splitting ensures
that $\angle(E_x,F_x)$ is bounded away from zero. This
ensures that $1\le|\det DX_{-t}|=|\det A_t(x)|\cdot
|B_t(x)|\cdot \sin\angle(E_x,F_x)$ and since
$B_t(x)=DX_{-t}\mid F_x$ we have $|B_t(x)|\le C
e^{-t\lambda/2}$, hence $|\det A_t(x)|$ grows exponentially
fast.

This shows that $\Lambda$ has a continuous invariant
splitting $E\oplus F$ with $F$ uniformly contracting and $E$
area expanding for the backward flow. In a three-dimensional
manifold this shows that $E$ is sectionally-expanding for
$-X$. From Corollary~\ref{mthm:domination-no-sing} we have
that $\Lambda$ is a hyperbolic set for $-X$, hence $\Lambda$
is a hyperbolic set and the proof of
Theorem~\ref{mthm:unifexp3d-hyp} and
Corollary~\ref{cor:unifexp3d-hyp} is complete.


\section{Examples}\label{examples}

This section is devoted to present some examples.

\subsection{Proof of Proposition~\ref{t.examples}}

Here we present examples stated in Proposition~\ref{t.examples}.

\vspace{0.2in}

{\bf Item $(1)$:}
  \label{ex:morales}
  Consider a vector field on the plane $\R^2$ with a double
  saddle homoclinic connection which expands volume and
  multiply this vector field by a contraction along the
  vertical direction; see
  Figure~\ref{fig:double-homocl-saddle}. For the
  construction of the planar vector field, see
  e.g. \cite[Chapter 4, Section 9]{perko01}.

  \begin{figure}[htpb]
    \centering
    \includegraphics[width=9cm]{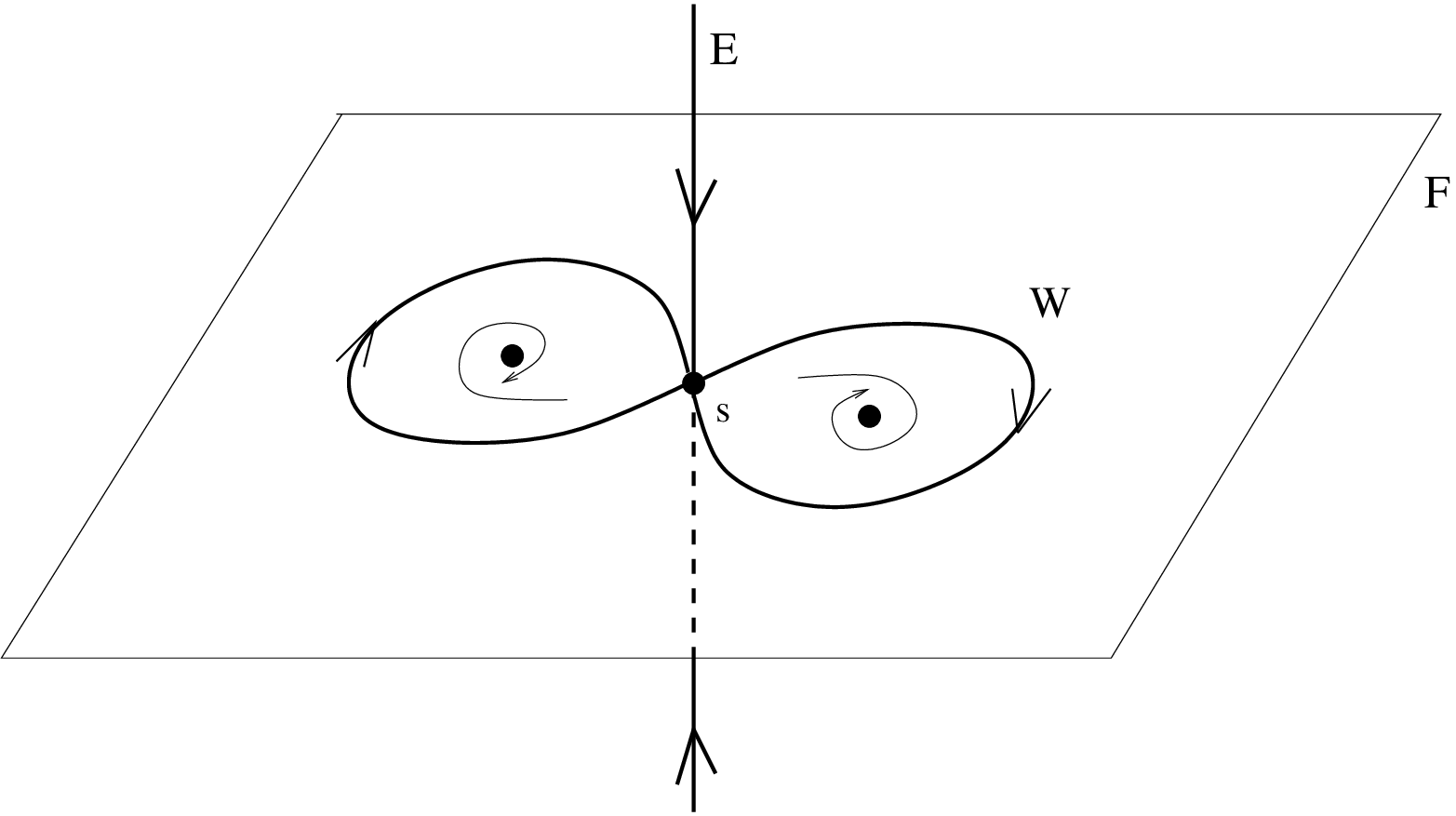}
    \caption{A double homoclinic saddle connection expanding area}
    \label{fig:double-homocl-saddle}
  \end{figure}

  The compact invariant set $W$, for the vector field $X$,
  formed by the pair of homoclinic connections together with
  the singularity $s$ admits a splitting $E\oplus F$, where
  $E$ is the vertical direction and $F$ is the plane
  direction. This splitting is continuous, $E$ is
  uniformly contracting and $F$ is uniformly area expanding.

  Indeed, since we assume that the singularity $s$ expands
  area, i.e., $|\det DX_t(x)\vert_{L_x}|\ge c e^{\lambda t}$ for all
  $t>0$ and all $2$-plane $L_x \subset F_x$, for some fixed constants $c,\lambda>0$, we can fix
  a neighborhood $U$ of $s$ such that
  \begin{align*}
    |\det DX_t(x)\vert_{L_x}|\ge c e^{\lambda t}, \forall 0\le t\le1,
    \forall x\in U
  \end{align*}
  and consequently
  \begin{align*}
    |\det DX_t(x)\vert_{L_x}|\ge c e^{\lambda t}, \forall t>0
  \end{align*}
  whenever the positive orbit of $x$ is contained in
  $U$. Now we observe that there exists $T>0$ such that for
  all $x\in W\setminus U$ we have $X_t(x)\in U$ for all
  $t\in\R$ satisfying $|t|>T$. Hence we can choose $k=\inf\{
  c^{-1}e^{-\lambda t}|\det DX_t(x)\vert_{L_x}|: x\in W\setminus U,
  0\le t\le T\}>0$
  and write $
    |\det DX_t(x)\vert_{L_x}|\ge k\cdot c e^{\lambda t}$
    for $0\le t\le T;$
  and for $t>T$
  \begin{align*}
  |\det DX_t(x)\vert_{L_x}|&=
    |\det DX_{t-T}(X_T(x))\vert_{L_{X_T(x)}}|\cdot |\det DX_T(x)\vert_{L_x}|
    \ge
    c e^{\lambda (t-T)}\cdot k\cdot c e^{\lambda T}
    =
    kc^2 e^{\lambda t}.
  \end{align*}
  So we can find a constant $\kappa>0$ so that $|\det
  DX_t(x)\vert_{L_x}|\ge \kappa e^{\lambda t}$ for all $x\in W$ and
  every $t\ge0$.
  
  To obtain a \emph{non-dominated} splitting, just choose the
  contraction rate along $E$ to be weaker than the
  contraction rate of the singularity $s$.

\begin{example}
  \label{ex:transitive}
  Considering, in the previous Item $(1)$, $\Lambda$ as the
  union of the saddle $s$ with only one homoclinic
  connection, we have $T_\Lambda M=E\oplus F$ a
  non-dominated continuous splitting over a compact
  invariant and isolated set containing a singularity of the
  flow. Moreover, $\Lambda$ is the $\alpha$-limit set of all
  points of the plane $\R^2\times\{0\}$ in the region
  bounded by the curve $\Lambda$ except the sink; see
  \cite[Chapter 4, Section 9]{perko01}.
\end{example}

\vspace{0.2in}

{\bf Item $(2)$:}
  \label{ex:vitor}
 Consider the flow known as ``Bowen example'';
  see e.g. \cite{Ta95} and Figure~\ref{fig:bowen}.
  \begin{figure}[htpb]
    \centering
    \includegraphics[width=11cm]{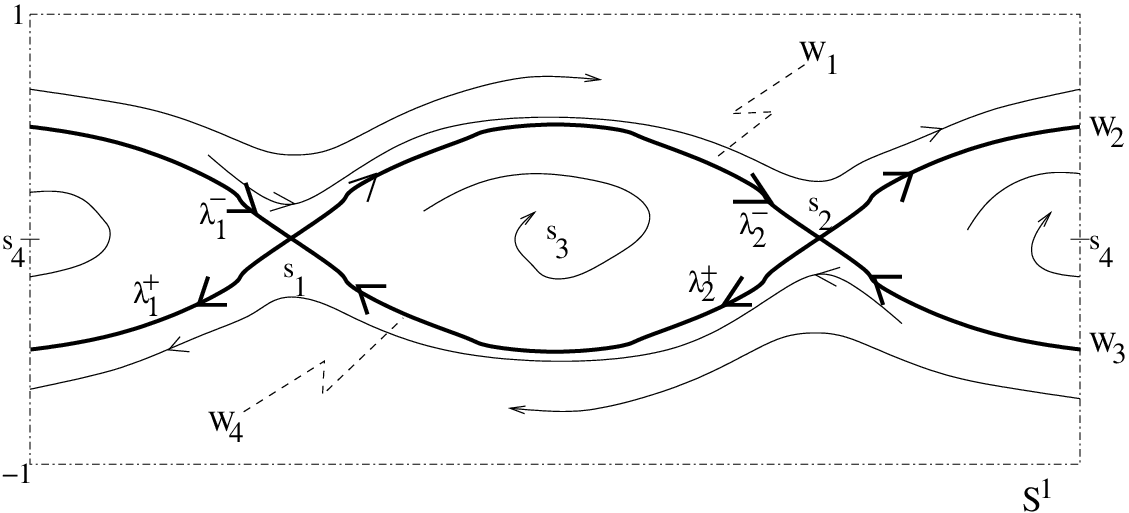}
    \caption{A sketch of Bowen's example flow.}
    \label{fig:bowen}
  \end{figure}
  We have chosen to reverse time with respect to the flow
  studied in \cite{Ta95} so that the heteroclinic connection
  $W=\{s_1,s_2\}\cup W_1\cup W_2 \cup W_3\cup W_4$ is now a
  repeller. The past orbit under this flow $\phi_t$ of every
  point $z\in S^1\times [-1,1] = M$ not in $W$ accumulates
  on either side of the heteroclinic connection, as
  suggested in the figure, if we impose the condition
  $\la_1^{-} \la_2^{-} < \la_1^{+} \la_2^{+}$ on the
  eigenvalues of the saddle fixed points $s_1$ and $s_2$
  (for more specifics on this see \cite{Ta95} and references
  therein).  In this setting, the saddle singularities
  $s_1,s_2$ are area expanding: $|\det D\phi_t(s_i)|$ grows
  exponentially with $t>0$, $i=1,2$.

  We can now embed this system in the $2$-sphere putting two
  sinks at the ``north and south poles'' of $\sS^2$; see
  Figure~\ref{fig:bowen-flow-embedd}.  Let us denote the
  corresponding vector field on $\sS^2$ by $X$.

  \begin{figure}[htpb]
    \centering
    \psfrag{s}{$s_1$} \psfrag{r}{$s_2$}
        \psfrag{t}{$s_3$} \psfrag{u}{$s_4$}
     \includegraphics[width=5cm]{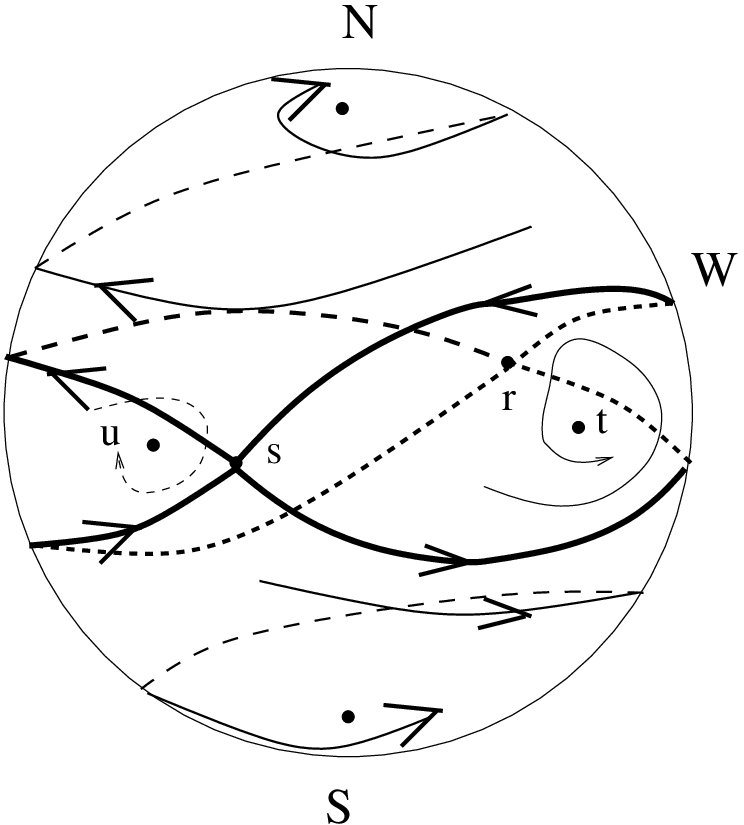}
    \caption{The Bowen flow embedded in a sphere}
    \label{fig:bowen-flow-embedd}
  \end{figure}

  Let $Y$ be a vector field in $\sS^1$ corresponding to the
  ``north-south'' system: the corresponding flow has only
  two fixed points $N$ and $S$, $N$ is a source and $S$ a
  sink.  We can choose the absolute value of the eigenvalues
  of $Y$ at $N,S$ to be between the absolute value of the
  eigenvalues of $s_1,s_2$.

  In this way, for the flow $\{Z_t\}_{t\in\R}$
  associated to the vector field
  $X\times Y$ on $\sS^2\times\sS^1$, the singularities
  $\{s_i\times S\}_{i=1,2}$ of the invariant set
  $\Lambda:=W\times\{S\}$ are hyperbolic saddles, and the
  splitting $T_\Lambda(\sS^2\times\sS^1)=E\oplus F$ given by
  \begin{align*}
    E_{(p,S)}=\{0\}\times T_S\sS^1
    \quad\text{and}\quad
    F_{(p,S)}=T_p\sS^2\times\{0\}
    \quad\text{for}\quad
    p\in W,
  \end{align*}
  is continuous, $DZ_t$-invariant, $E$ is uniformly contracted and
  $F$ is area expanding (by an argument similar to the
  previous example), but it is not dominated.

  Example~\ref{ex:transitive} and items $(1)$ and $(2)$ of
  Proposition \ref{t.examples} show that
  Theorem~\ref{mthm:domination} is false without the
  dominating condition on the splitting $E\oplus F$ at the
  set of singularities within $\Lambda$.

\vspace{0.2in}

{\bf Item $(3)$:}
  \label{ex:no-domination}
  Now we present a suspension flow whose base map has a
  dominated splitting but the flow does not admit any
  dominated splitting.

  Let $f:\T^4\times\T^4$ be the diffeomorphism described
  in \cite{BoV00} which admits a continuous dominated
  splitting $E^{cs}\oplus E^{cu}$ on $\T^4$, but does not
  admit any hyperbolic (uniformly contracting or expanding)
  subbundle.
  \begin{figure}[htpb]
    \includegraphics[height=2cm]{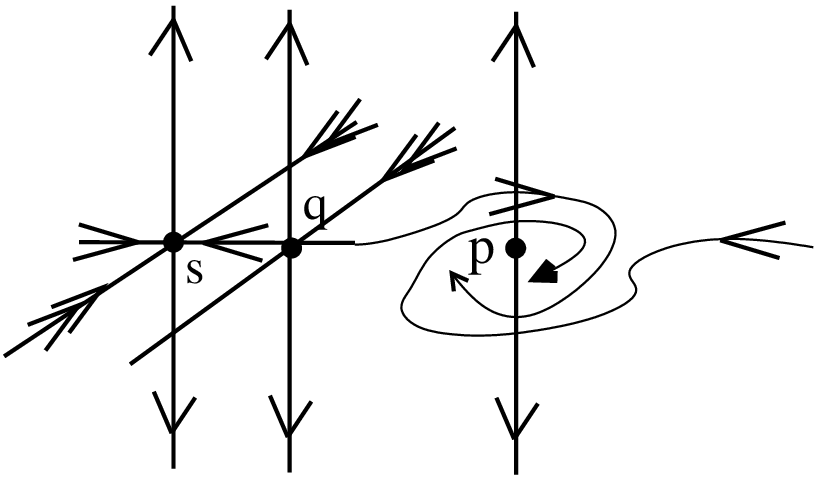}
    \caption{Saddles with real and complex eigenvalues.}
    \label{fig:nonhypcomplex}
  \end{figure}
  There are hyperbolic fixed points of $f$ satisfying (see
  Figure~\ref{fig:nonhypcomplex}):
  \begin{itemize}
  \item $\dim E^u(p)=2=\dim E^s(p)$ and there exists no
    invariant one-dimensional sub-bundle of $E^u(p)$;
  \item $\dim E^u(\widetilde p)=2=\dim E^s(\widetilde p)$ and there exists no
    invariant one-dimensional sub-bundle of $E^s(\widetilde
    p)$;
  \item $\dim E^s(\widetilde q)=3$ and $\dim E^u(q)=3$.
  \end{itemize}
  Hence, the suspension flow of $f$ with constant roof
  function $1$ does not admit any dominated splitting. In
  fact, the natural invariant splitting $E^{cs}\oplus
  E^X\oplus E^{cu}$ is the continuous invariant splitting
  over $\T^4\times [0,1]$ with bundles of least dimension,
  and is not dominated since at the point $p$ the flow
  direction $E^X(p)$ dominates the $E^{cs}(p)=E^s(p)$
  direction, but at the point $q$ this domination is
  impossible.
  
This completes the proof of Proposition~\ref{t.examples}.

  \subsection{Proof of Proposition~\ref{diffeo-case}}

  We present first some auxiliary notions.

Let $f: M \to M$ be a $C^1$-diffeomorphism and $\Gamma
\subset M$ a compact $f$-invariant set, that is,
$f(\Gamma)=\Gamma$.  We say that $\Gamma$ is \emph{transitive} if
there exists $x \in \Gamma$ with dense orbit: the closure
of $\{f^n(x):n\ge1\}$ equals $\Gamma$. The invariant set
$\Gamma$ is \emph{topologically mixing} if, for each pair
$U,V$ of non-empty open sets of $\Gamma$, there exists
$N=N(U,V)\in\Z^+$ such that $U\cap f^n(V)\neq\emptyset$ for
all $n>N$.

We say that $\Gamma$ is an attracting set if there exists a
neighborhood $U$ of $\Gamma$ in $M$ such that $ \cap_{n \geq
  0} f^n(U) = \Gamma$; in this case we say that $U$ is an
isolating neighborhood for $\Gamma$. An \emph{attractor} is
a transitive attracting set and a \emph{repeller} is an
attractor for the inverse diffeomorphism $f^{-1}$.  It is
well-known that hyperbolic attractors or repellers can be
decomposed into finitely many compact subsets which are
permuted and each of these pieces is topologically mixing
for a power of the original map; see
e.g.~\cite{robinson2004}.

Now, we present the example stated in
Proposition~\ref{diffeo-case}.  Consider a hyperbolic
attractor (a Plykin attractor, see \cite{Pl74}) $\Gamma$,
with an isolating neighborhood $U$, defined on the
two-dimensional sphere $\sS^2$ for a diffeomorphism $f:
\sS^2 \to \sS^2$, with a splitting $T_\Gamma\sS^2=E\oplus F$
satisfying $\|Df\mid_E\|\le\lambda_s$ and
$\|Df\mid_F\|\ge\lambda_u$ where $0<\lambda_s < 1 <
\lambda_u$ and $\lambda_s\cdot\lambda_u<1$.

 Now we consider the following diffeomorphism
$$
g = f_1 \times f_2 :M\to M, \quad\text{with}\quad M= \sS^2 \times \sS^2,
$$
where $f_1 := f^2$ and $f_2:= f^{-1}$. We note that
$\Gamma_1 := \cap_{n \geq 0} f_1^n(U)$ is a hyperbolic
attractor with respect to $f_1$ whose contraction and
expansion rates along its hyperbolic splitting
$T_{\Gamma_1}\sS^2=E_1\oplus F_1$ are $\lambda^2_s,
\lambda^2_u$, respectively. Likewise $\Gamma_2:=\cap_{n\le0}
f_2^n(U)$ is a hyperbolic repeller with respect to $f_2$
whose contraction and expansion rates along its hyperbolic
splitting $T_{\Gamma_2}\sS^2=E_2\oplus F_2$ are
$\lambda^{-1}_u, \lambda^{-1}_s$, respectively. This implies
in particular that $Df_1$ contracts area near $\Gamma_1$,
that is, $\lambda_s^2\cdot\lambda_u^2<1$
and that $Df_2$ expands area near $\Gamma_2$,
that is, $(\lambda_s\cdot\lambda_u)^{-1}>1$.
Moreover, the set
$$
\Gamma := \Gamma_1 \times \Gamma_2
$$
is a hyperbolic set for $g$, whose hyperbolic splitting is
given by $(E_1\times E_2)\oplus(F_1\times F_2)$.

Now we note, on the one hand, that the splitting $T_\Gamma
M=E\oplus F$ with $E=E_1$ and $F=F_1\oplus E_2\oplus F_2$ is
not dominated, since the contraction rate along $E_2$ is
stronger than the contraction rate along $E_1$, but $E$ is
uniformly contracted and $F$ is uniformly
sectionally-expanded, since the Lyapunov
  exponents $2\log\lambda_u, -\log\lambda_u, -\log\lambda_s$
  are such that each pair has positive sum.

On the other hand, the decomposition $T_{\Gamma}M =
T_{\Gamma_1}\sS^2 \times T_{\Gamma_2}\sS^2=E\oplus F$ where
$E=E_1\oplus F_1$ and $F=E_2\oplus F_2$ is continuous, since
each subbundle is continuous; is also $Dg$-invariant; but it
is not a dominated decomposition.  In fact, we have the
contraction/expansion rates $0<\lambda^2_s<1< \lambda^2_u$
associated to $E$ and the corresponding rates
$\lambda_u^{-1}, \lambda_s^{-1}$ associated to $F$,
satisfying the relation $\lambda^2_s < \lambda^{-1}_u < 1 <
\lambda^{-1}_s <\lambda^2_u$ (since
$0<\lambda_s\cdot\lambda_u<1$ and $0<\lambda_s<1<\lambda_u$
imply that $\lambda_s^2<\lambda_u^{-1}$ and
$\lambda_s^{-1}<\lambda_u^2$).  Therefore the splitting
$E\oplus F$ cannot be dominated.  In addition, as noted
above, both $Dg^{-1}\mid_E$ and $Dg\mid_F$ expand area, so
that $E$ is sectionally-contracted and $F$ is
secionally-expanded.

Finally, since both $f_1\mid_{\Gamma_1}$ and $f_2\mid_{\Gamma_2}$ are
topologically mixing, then $g\mid_{\Gamma}$ is transitive;
see e.g.~\cite{Wa82}. 

This example shows that there are transitive hyperbolic sets
with a non-dominated, although continuous, splitting
satisfying the sectional-expansion and uniform contraction
or sectional contraction conditions. This completes the
proof of Proposition~\ref{diffeo-case}.

\section*{Acknowledgments}
This is a part of the PhD thesis of L. Salgado at Instituto de
Matem\'atica-Universidade Federal do Rio de Janeiro under a
CNPq (Brazil) scholarship and she is now supported by
INCTMat-CAPES post-doctoral scholarship at IMPA.

The authors thank C. Morales and L. D\'iaz for many
observations that helped improve the statements of the
results.


\def\cprime{$'$}

\begin{thebibliography}{10}

\bibitem{AraPac2010}
V.~Ara{\'u}jo and M.~J. Pacifico.
\newblock {\em Three-dimensional flows}, volume~53 of {\em Ergebnisse der
  Mathematik und ihrer Grenzgebiete. 3. Folge. A Series of Modern Surveys in
  Mathematics [Results in Mathematics and Related Areas. 3rd Series. A Series
  of Modern Surveys in Mathematics]}.
\newblock Springer, Heidelberg, 2010.
\newblock With a foreword by Marcelo Viana.

\bibitem{arbieto2010}
A.~Arbieto.
\newblock Sectional lyapunov exponents.
\newblock {\em Proc. of the Amercian Mathematical Society}, 138:3171--3178,
  2010.

\bibitem{arnold-l-1998}
L.~Arnold.
\newblock {\em {Random dynamical systems}}.
\newblock {Springer-Verlag}, {Berlin}, {1998}.

\bibitem{BarPes2007}
L.~Barreira and Y.~Pesin.
\newblock {\em Nonuniform hyperbolicity}, volume 115 of {\em Encyclopedia of
  Mathematics and its Applications}.
\newblock Cambridge University Press, Cambridge, 2007.
\newblock Dynamics of systems with nonzero Lyapunov exponents.

\bibitem{BDV2004}
C.~Bonatti, L.~J. D{\'i}az, and M.~Viana.
\newblock {\em {Dynamics beyond uniform hyperbolicity}}, volume {102} of {\em
  {Encyclopaedia of Mathematical Sciences}}.
\newblock {Springer-Verlag}, {Berlin}, {2005}.
\newblock {A global geometric and probabilistic perspective, Mathematical
  Physics, III}.

\bibitem{BGV06}
C.~Bonatti, N.~Gourmelon, and T.~Vivier.
\newblock {Perturbations of the derivative along periodic orbits}.
\newblock {\em {Ergodic Theory Dynam. Systems}}, {26}({5}):{1307--1337},
  {2006}.

\bibitem{BPV97}
C.~Bonatti, A.~Pumari{\~n}o, and M.~Viana.
\newblock {Lorenz attractors with arbitrary expanding dimension}.
\newblock {\em {C. R. Acad. Sci. Paris S{\'e}r. I Math.}},
  {325}({8}):{883--888}, {1997}.

\bibitem{BoV00}
C.~Bonatti and M.~Viana.
\newblock {SRB measures for partially hyperbolic systems whose central
  direction is mostly contracting}.
\newblock {\em {Israel J. Math.}}, {115}:{157--193}, {2000}.

\bibitem{Bo75}
R.~Bowen.
\newblock {\em {Equilibrium states and the ergodic theory of Anosov
  diffeomorphisms}}, volume {470} of {\em {Lect. Notes in Math.}}
\newblock {Springer Verlag}, {1975}.

\bibitem{BR75}
R.~Bowen and D.~Ruelle.
\newblock {The ergodic theory of Axiom A flows}.
\newblock {\em {Invent. Math.}}, {29}:{181--202}, {1975}.

\bibitem{Do87}
C.~I. Doering.
\newblock {Persistently transitive vector fields on three-dimensional
  manifolds}.
\newblock In {\em {Procs. on Dynamical Systems and Bifurcation Theory}}, volume
  {160}, pages {59--89}. {Pitman}, {1987}.

\bibitem{GW2006}
S.~Gan and L.~Wen.
\newblock Nonsingular star flows satisfy {A}xiom {A} and the no-cycle
  condition.
\newblock {\em Invent. Math.}, 164(2):279--315, 2006.

\bibitem{KH95}
A.~Katok and B.~Hasselblatt.
\newblock {\em {Introduction to the modern theory of dynamical systems}},
  volume~{54} of {\em {Encyclopeadia Appl. Math.}}
\newblock {Cambridge University Press}, {Cambridge}, {1995}.

\bibitem{LGW05}
M.~Li, S.~Gan, and L.~Wen.
\newblock {Robustly transitive singular sets via approach of an extended linear
  Poincar{\'e} flow}.
\newblock {\em {Discrete Contin. Dyn. Syst.}}, {13}({2}):{239--269}, {2005}.

\bibitem{LiangLiu2008}
C.~Liang and G.~Liu.
\newblock Dominated splitting versus small angles.
\newblock {\em Acta Math. Sin. (Engl. Ser.)}, 24(7):1163--1174, 2008.

\bibitem{Li80}
S.~T. Liao.
\newblock {On the stability conjecture}.
\newblock {\em {Chinese Annals of Math}}, {1}:{9--30}, {1980}.

\bibitem{Man82}
R.~Ma{\~n}{\'e}.
\newblock {An ergodic closing lemma}.
\newblock {\em {Annals of Math.}}, {116}:{503--540}, {1982}.

\bibitem{MeMor06}
R.~Metzger and C.~Morales.
\newblock Sectional-hyperbolic systems.
\newblock {\em Ergodic Theory and Dynamical System}, 28:1587--1597, 2008.

\bibitem{MPP04}
C.~A. Morales, M.~J. Pacifico, and E.~R. Pujals.
\newblock {Robust transitive singular sets for 3-flows are partially hyperbolic
  attractors or repellers}.
\newblock {\em {Ann. of Math. (2)}}, {160}({2}):{375--432}, {2004}.

\bibitem{Moryiasu1991}
K.~Moryiasu.
\newblock The extension of dominated splittings for $c^{1}$-regular maps.
\newblock {\em Tokyo J. Math.}, 14(2):425--437, 1991.

\bibitem{Newhouse2004}
S.~Newhouse.
\newblock Cone-fields, domination, and hyperbolicity.
\newblock In {\em Modern dynamical systems and applications}, pages 419--432.
  Cambridge Univ. Press, Cambridge, 2004.

\bibitem{PM82}
J.~Palis and W.~{de Melo}.
\newblock {\em {Geometric Theory of Dynamical Systems}}.
\newblock {Springer Verlag}, {1982}.

\bibitem{perko01}
L.~Perko.
\newblock {\em Differential equations and dynamical systems}, volume~7 of {\em
  Texts in Applied Mathematics}.
\newblock Springer-Verlag, New York, third edition, 2001.

\bibitem{Pl74}
R.~V. Plykin.
\newblock {Sources and currents of A-diffeomorphisms of surfaces}.
\newblock {\em {Math USSR Sbornik}}, {94}:{243--264}, {1974}.

\bibitem{robinson2004}
C.~Robinson.
\newblock {\em {An introduction to dynamical systems: continuous and
  discrete}}.
\newblock {Pearson Prentice Hall, Upper Saddle River, NJ}, {2004}.

\bibitem{Si68}
Y.~Sinai.
\newblock {Markov partitions and C-diffeomorphisms}.
\newblock {\em {Func. Anal. and Appl.}}, {2}:{64--89}, {1968}.

\bibitem{Sm67}
S.~Smale.
\newblock {Differentiable dynamical systems}.
\newblock {\em {Bull. Am. Math. Soc.}}, {73}:{747--817}, {1967}.

\bibitem{St58}
E.~Sternberg.
\newblock {On the structure of local homeomorphisms of euclidean $n$-space -
  II}.
\newblock {\em {Amer. J. Math.}}, {80}:{623--631}, {1958}.

\bibitem{Ta95}
F.~Takens.
\newblock {Heteroclinic attractors: time averages and moduli of topological
  conjugacy}.
\newblock {\em {Bull. Braz. Math. Soc.}}, {25}:{107--120}, {1995}.

\bibitem{Tu99}
W.~Tucker.
\newblock {The Lorenz attractor exists}.
\newblock {\em {C. R. Acad. Sci. Paris}}, {328, S{\'e}rie I}:{1197--1202},
  {1999}.

\bibitem{Vi98}
M.~Viana.
\newblock Dynamics: a probabilistic and geometric perspective.
\newblock In {\em Proceedings of the {I}nternational {C}ongress of
  {M}athematicians, {V}ol. {I} ({B}erlin, 1998)}, number~I in Extra Vol., pages
  557--578 (electronic), 1998.

\bibitem{Wa82}
P.~Walters.
\newblock {\em {An introduction to ergodic theory}}.
\newblock {Springer Verlag}, {1982}.

\bibitem{Wen2002}
L.~Wen.
\newblock Homoclinic tangencies and dominated splittings.
\newblock {\em Nonlinearity}, 15(5):1445--1469, 2002.

\end{thebibliography}

\end{document}